\newcommand{\PP}[1]{\mathbb{P}\left\{#1\right\}}
\renewcommand{\Vec}{\operatorname{vec}}
\newcommand{\Mat}{\operatorname{mat}}
\newcommand{\R}{\mathbb{R}}
\newcommand{\Complex}{\mathbb{C}}
\newcommand{\lam}{\lambda}
\newcommand{\Lam}{\Lambda}
\newcommand{\mb}[1]{\left[\begin{array}{#1}}
\newcommand{\me}{\end{array}\right]}
\newcommand{\smb}{\left[\begin{smallmatrix}}
\newcommand{\sme}{\end{smallmatrix}\right]}
\newcommand{\kr}{\odot}
\newcommand{\kron}{\otimes}
\newcommand{\hadprod}{\ast}
\newcommand{\pinv}{\dagger}
\DeclareMathOperator{\diag}{diag}
\DeclareMathOperator{\spann}{span}
\newtheorem{theorem}{Theorem}
\newtheorem{example}[theorem]{Example}
\newtheorem{lemma}[theorem]{Lemma}
\newtheorem{remark}[theorem]{Remark}
\newtheorem{corollary}[theorem]{Corollary}
\newtheorem{definition}[theorem]{Definition}
\renewcommand{\algorithmicrequire}{\textbf{Input: }}
\renewcommand{\algorithmicensure}{\textbf{Output: }}
\DeclareFontFamily{U}{matha}{\hyphenchar\font45}
\DeclareFontShape{U}{matha}{m}{n}{
      <5> <6> <7> <8> <9> <10> gen * matha
      <10.95> matha10 <12> <14.4> <17.28> <20.74> <24.88> matha12
      }{}
\DeclareSymbolFont{matha}{U}{matha}{m}{n}
\DeclareFontFamily{U}{mathx}{\hyphenchar\font45}
\DeclareFontShape{U}{mathx}{m}{n}{
      <5> <6> <7> <8> <9> <10>
      <10.95> <12> <14.4> <17.28> <20.74> <24.88>
      mathx10
      }{}
\DeclareSymbolFont{mathx}{U}{mathx}{m}{n}
\DeclareMathDelimiter{\vvvert}{0}{matha}{"7E}{mathx}{"17}
\begin{document}

\title{Subspace embedding with random Khatri--Rao products \\ and its application to eigensolvers}

\author{%
Zvonimir Bujanovi\'c\thanks{University of Zagreb, Faculty of Science, Department of Mathematics, Croatia. E-mails: \texttt{zbujanov@math.hr}, \texttt{luka@math.hr}. The work of both authors was supported by Hrvatska Zaklada za Znanost (Croatian Science Foundation) under the grant IP-2019-04-6268 - Randomized low-rank algorithms and applications to parameter dependent problems.} \and
Luka Grubi\v{s}i\'c\footnotemark[1] \and
Daniel Kressner\thanks{\'Ecole Polytechnique F\'ed\'erale de Lausanne (EPFL), Institute of Mathematics, 1015 Lausanne, Switzerland. E-mails: \texttt{daniel.kressner@epfl.ch}, \texttt{hysan.lam@epfl.ch}. The work of both authors was supported by the SNSF research project \textit{Fast algorithms from low-rank updates}, grant number: 200020\_178806.}\and Hei Yin Lam\footnotemark[2]}

\maketitle

\begin{abstract}
Various iterative eigenvalue solvers have been developed to compute parts of the spectrum for a large sparse matrix, including 
the power method, Krylov subspace methods, contour integral methods, and preconditioned solvers such as the so called LOBPCG method.
All of these solvers rely on random matrices to determine, e.g., starting vectors that have, with high probability, a non-negligible overlap with the eigenvectors of interest. For this purpose, a safe and common choice are unstructured Gaussian random matrices. In this work, we investigate the use of random Khatri--Rao products in eigenvalue solvers. On the one hand, we establish a novel subspace embedding property that provides theoretical justification for the use of such structured random matrices. On the other hand, we highlight the potential algorithmic benefits when solving eigenvalue problems with Kronecker product structure, as they arise frequently from the discretization of eigenvalue problems for differential operators on tensor product domains. In particular, we consider the use of random Khatri--Rao products within a contour integral method and LOBPCG. Numerical experiments indicate that the gains for the contour integral method strongly depend on the ability to efficiently and accurately solve (shifted) matrix equations with low-rank right-hand side. The flexibility of LOBPCG to directly employ preconditioners makes it easier to benefit from Khatri--Rao product structure, at the expense of having less theoretical justification.
\end{abstract}

\section{Introduction}
\label{sec:intro}
During the last decade, the significance of randomization in numerical linear algebra has been increasingly recognized; see~\cite{MartinssonTropp20,MurrayDemmel23} and the references therein. In particular, randomized sketching is used as a simple but effective technique to build a dimension reduction map (DRM): A problem that features a potentially large input matrix $B \in \R^{m \times n}$ is reduced to a smaller one by replacing $B$ with $B \Omega$, where $\Omega \in \R^{n \times \ell}$, $\ell \ll n$, is a random matrix. This idea has been very successfully used as a basis for, e.g., the randomized SVD \cite{halko2011finding}, or in subspace projection methods for large-scale eigenvalue problems, such as FEAST \cite{FEAST-Polizzi}. In fact, essentially all iterative methods for large-scale eigenvalue problems, including the power method and the Lanczos algorithm~\cite{Golub2013}, make use of random initial guesses, which effectively involves sketching.

Random matrices that are typically applied as DRMs include random Gaussian and Rademacher matrices, subsampled randomized Hadamard (SRHT) or Fourier (SRFT) transforms; see~\cite[Ch. 2]{MurrayDemmel23}. To derive probabilistic error bounds for algorithms involving such DRMs, one needs to take properties of the particular random matrix distribution into account. In particular, one requires some overlap with the fixed, but unknown subspace of interest (e.g., an invariant subspace). A popular way to phrase this is the \emph{oblivious subspace embedding} (OSE) property~\cite{Sarlos2006}, which is a generalization of the well-known Johnson-Lindenstrauss (JL) property. Given a vector  $x$, a matrix $\Omega \in \R^{n \times \ell}$ satisfies the JL property if the application of $\Omega^T$ preserves the norm of $x$ up to some prescribed relative tolerance $\varepsilon$:
$$
    (1-\varepsilon) \|x\|_2 \leq \|\Omega^T x\|_2 \leq (1+\varepsilon)\|x\|_2.
$$  
A random matrix $\Omega$ drawn from a random distribution $\mathcal{D}$ has the $(\varepsilon, \delta, k)$-OSE property if, with probability at least $1-\delta$, the JL property holds for all vectors $x$ in an arbitrary but fixed $k$-dimensional subspace $\mathcal{U} \subset \R^n$. This is is equivalent to verifying that
\begin{equation}
    \label{eq:ose}
    \PP{\|(\Omega^T U)^T (\Omega^T U) - I\|_2 > \varepsilon } < \delta, 
\end{equation}
holds for fixed but arbitrary $U \in \R^{n \times k}$ with orthonormal columns. Here, the probability is taken with respect to $\Omega \sim \mathcal{D}$.
Random Gaussian matrices $\Omega \in \R^{n \times \ell}$ satisfy the OSE property when $\ell \sim (k + \log(1/\delta))\varepsilon^{-2}$; other classes of random matrices such as SRHT/SRFT are OSE as well but with less favorable requirements for $\ell$; see \cite{Woodruff2014Sketching, MartinssonTropp20, MurrayDemmel23} and the references therein. 

\paragraph{Random Khatri--Rao matrices.} In applications, it is often beneficial to employ DRMs that exploit the underlying structure of the problem. As highlighted in~\cite[Ch. 7]{MurrayDemmel23}, one particularly useful class of structured DRMs are based on Khatri--Rao products of random matrices. Given two matrices, $\tilde{\Omega} \in \R^{\tilde{n} \times \ell}$ with columns $\tilde{\omega}_1, \ldots, \tilde{\omega}_\ell$, and $\hat{\Omega} \in \R^{\hat{n} \times \ell}$ with columns $\hat{\omega}_1, \ldots, \hat{\omega}_\ell$, their Khatri--Rao product is defined as
\begin{equation*}
    \label{eq:kr}    
    \tilde{\Omega} \kr \hat{\Omega} = \mb{cccc} \tilde{\omega}_1 \kron \hat{\omega}_1, & \tilde{\omega}_2 \kron \hat{\omega}_2, & \ldots, & \tilde{\omega}_\ell \kron \hat{\omega}_\ell \me \in \R^{n \times \ell}, \quad n = \tilde{n} \cdot \hat{n},
\end{equation*}
where the symbol $\kron$ denotes the usual Kronecker product of vectors.
One major benefit of such matrices is their compatibility with operators $A$ that themselves have Kronecker product structure. This is frequently the case in applications related to, e.g., partial differential equations (PDEs) on tensor product domains (see, e.g., \cite{Palitta21}), for which structured finite difference or finite element discretizations may lead to matrices $A$ that are short sums of Kronecker products: $A = \sum_{i=1}^{s} \tilde{A}_i \kron \hat{A}_i$. In this case, the computation of $A \Omega$  becomes much cheaper when $\Omega$ is a Khatri--Rao product: $A \Omega = \sum_{i=1}^{s} (\tilde{A}_i \tilde{\Omega}) \kron (\hat{A}_i \hat{\Omega})$. 

The first goal of this paper is to establish OSE for Khatri--Rao products of random matrices, building on recent work by Ahle et al.~\cite{ahle2020oblivious}. The requirement on the number of samples $\ell$ increases only modestly compared to unstructured random Gaussian matrices; this increase is easily overcome by the improved computational efficiency. 

\paragraph{Eigenvalue solvers with random Khatri--Rao matrices.}

The second goal of the paper is to explore the potential of using random Khatri--Rao products within eigensolvers. Contour integration methods~\cite{FEAST-Polizzi,FEAST-SubspaceIteration,Sakurai-FirstPaper,Sakurai-Block,Beyn} are particularly well suited for the task; there, computing the eigenvalues of the matrix $A$ that lie inside a contour $\Gamma \subseteq \Complex$ reduces to approximating the integral of the resolvent applied to a (random) matrix $\Omega$ by using a quadrature formula:
\begin{equation}
    \label{eq:contour}
    \frac{1}{2\pi i} \int_{\Gamma} (zI - A)^{-1} \Omega \, \textrm{d}z 
        \approx \frac{1}{2\pi i} \sum_{i=1}^q w_i (z_i I - A)^{-1} \Omega.
\end{equation}
The range of the computed matrix on the right-hand side approximately spans the corresponding invariant subspace. Assume that $\Omega$ is a Khatri--Rao product of two matrices. Then, instead of solving a sequence of shifted linear systems with the large matrix $A$, evaluating \eqref{eq:contour} can be rewritten as solving a sequence of Sylvester equations with rank-one right hand side, which can be done much more efficiently; see Section \ref{sec:contour}. In both, theory and practice, we confirm that the accuracy of the computed invariant subspace is comparable to using a random Gaussian $\Omega$ with no underlying structure. 

The LOBPCG (Locally Optimally Block Preconditioned Conjugate Gradient) method~\cite{Knyazev2001Toward} for computing extreme eigenvalues of a large symmetric positive definitive matrix can also be implemented so that it exploits an initial iteration with a Khatri--Rao product structure. We show that this is possible by keeping the subsequent iterations in a low-rank factored form, and by limiting the rank of the iterates via truncation. Experimentally, we observe that this does not hamper convergence and leads to an efficient algorithm.

\paragraph{Related work.}
Random matrices with Kronecker/tensor product structure have been studied intensively in the literature, especially in algorithms for tensor decompositions. There, various constructions such as Kronecker SRFT \cite{BBK18}, TensorSketch \cite{Pagh13} or recursive tensoring \cite{ahle2020oblivious} are made in order to preserve computational efficiency while keeping the desirable sketching properties. Early work on using random Khatri--Rao products in applications includes \cite{BBB-KhatriRao-First15,KressnerPerisa17}, without a full theoretical analysis. So far, only a few works have attempted to establish JL and OSE properties for DRMs with the precise structure of the Khatri--Rao product. For the case of random matrices $\tilde{\Omega}$, $\hat{\Omega}$ with independent sub-Gaussian columns, it was recently shown \cite[Theorem 42]{ahle2020oblivious} that the JL property holds when
\begin{equation}
    \label{eq:ahlebound}
    \ell \sim \log(1/\delta)\varepsilon^{-2} + \log(1/\delta)^2 \varepsilon^{-1},
\end{equation}
improving earlier results reported in \cite{rakhshan2020tensorized,sun2018tensor}. For a subsampled Kronecker product of random matrices that has Khatri--Rao product structure, a slightly worse result has been established in \cite{Bamberger22}.

Random Khatri--Rao matrices have also been studied in the context of trace estimation \cite{BujanovicKressner21}. A recent result \cite{meyer2023hutchinson} shows that an order-$d$ random tensor obtained by taking Khatri--Rao product of $d$ random Gaussian matrices may drop in the performance as a trace estimator exponentially in $d$, which was also indicated by the results in~\cite{Vershynin20}. In this paper we are only interested in studying the case $d=2$.

Eigenvalue solvers that make use of the Kronecker structure of the matrix $A$ are also well-known in the literature. In particular, \cite{kressner2011preconditioned} studies the LOBPCG algorithm with block size 1 where the iterates are kept in a low-rank hierarchical Tucker format, and \cite{KressnerSteinlechnerUschmajew14} uses alternating optimization combined with the tensor-train format.
In this paper, we derive a novel variant of the LOBPCG algorithm with arbitrary block size, storing the vectors in a low-rank format adapted from \cite{KressnerSteinlechnerUschmajew14}.

To the best of our knowledge, this is the first paper to apply and analyze a contour-integral based eigensolver with random Khatri--Rao matrices.

\paragraph{Structure of the paper.}

In Section \ref{sec:analysis-ose}, use~\eqref{eq:ahlebound} to derive the requirement on $\ell$ that ensures the OSE property for Khatri--Rao products of Gaussian random matrices. The original derivation of~\eqref{eq:ahlebound} addresses the more general sub-Gaussian case and does not provide explicit constants. Mainly for the convenience of the reader and because it might be of independent interest, we have derived the constants for~\eqref{eq:ahlebound} for the Gaussian case in Appendix~\ref{sec:appendix}.
In Section \ref{sec:contour} we introduce a contour integration algorithm for computing eigenvalues that uses random Khatri--Rao products, and derive a probabilistic bound on the quality of the computed approximate invariant subspace. Finally, in Section \ref{sec:lobpcg}, we develop a low-rank variant of the LOBPCG algorithm, and provide numerical evidence of its efficiency.

All algorithms were implemented in Matlab. The numerical experiments were executed on a desktop computer with Intel Core i9-9900X CPU and 64GB RAM, running Ubuntu 22.04 and Matlab R2022b. The code to reproduce the numerical experiments is publicly available\footnote{\href{https://github.com/PMF-ZNMZR/khatri-rao-embedding}{https://github.com/PMF-ZNMZR/khatri-rao-embedding}}.

\section{Oblivious subspace embedding with random Khatri--Rao products}
\label{sec:analysis-ose}
In this section, we will establish the OSE property for Khatri--Rao matrices of Gaussian random matrices, based on an existing JL property; Theorem 42 from \cite{ahle2020oblivious}.

\subsection{JL moment property}

The following definition contains the usual probabilistic form of the JL property. 

\begin{definition} \label{def:jl}
For $\delta, \varepsilon > 0$, a random $n \times \ell$ matrix $\Omega$ satisfies the $(\varepsilon,\delta)$-distributional JL property, if
$\PP{|\|\Omega^T x\|_2^2-1|>\varepsilon}\leq \delta$ holds for any $x\in \mathbb{R}^n$ with $\|x\|_2 = 1$.
\end{definition}

In many cases, including random matrices with tensor product structure, it is easier to first obtain moment bounds instead of directly establishing the tail bound of Definition~\ref{def:jl}. %

\begin{definition}
\label{def: JL moment}
    For $\delta,\epsilon> 0$ and a positive integer $p$, a random $n \times \ell$ matrix $\Omega$ satisfies the $(\varepsilon,\delta,p)$-JL moment property, if 
    \begin{equation*}
        \big(\mathbb{E}\left|\|\Omega^T x\|^2_2-1\right|^p\big)^{\frac{1}{p}}\leq \varepsilon \delta^{\frac{1}{p}}\quad\text{and}\quad \mathbb{E}\|\Omega^T  x\|_2^2=1.
    \end{equation*} 
    hold for any $x\in \R^n$ with $\|x\|_2=1$.
\end{definition}
Let us recall that Definition~\ref{def: JL moment} implies Definition~\ref{def:jl} because of Markov's inequality:
\begin{equation}
\label{eq: JL lemma}
    \PP{\big|\|\Omega^T x\|^2_2-1\big|>\varepsilon} \leq \varepsilon^{-p}\cdot \mathbb{E}\big|\|\Omega^T x\|^2_2-1\big|^p \leq \delta.
\end{equation}

The Khatri--Rao product of $d$ independent sub-Gaussian random matrices satisfies the JL-moment property, provided that $\ell$ is sufficiently large~\cite[Theorem 42]{ahle2020oblivious}. In this paper, we focus on the  special case of $d=2$ Gaussian random matrices. We therefore restate \cite[Theorem 42]{ahle2020oblivious} for this case, and also provide an explicit constant for the lower bound of $\ell$, which was not provided in~\cite{ahle2020oblivious}. We include the proof of Theorem \ref{Thm: JLmoment} in the Appendix for the convenience of the reader, tracking all the constants involved.
\begin{theorem}
\label{Thm: JLmoment}
    Let $\varepsilon\in (0,1]$, $ \delta\in (0,e^{-8}]$ and $n=\tilde{n}\hat{n}$. Choose $\Omega=\frac{1}{\sqrt{\ell}}(\tilde{\Omega} \odot \hat{\Omega})\in \mathbb{R}^{n\times \ell}$ with independent Gaussian random matrices $\tilde{\Omega}\in \mathbb{R}^{\tilde{n}\times \ell}$ and $\hat{\Omega}\in \mathbb{R}^{\hat{n}\times \ell}$. Then $\Omega$ satisfies the $(\varepsilon,\delta,p)$-JL moment property with $p=\lceil \frac{1}{2}\log(\frac{1}{\delta})\rceil$, provided that 
    \begin{equation}
        \label{eq:JLmomentbound}
        \ell  \geq C^2\log({1}/{\delta})\varepsilon^{-2}+C\log^2({1}/{\delta})\varepsilon^{-1}, \quad C=128e^4.
    \end{equation}
\end{theorem}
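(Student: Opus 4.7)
The plan is to reshape $x$ as a matrix and recognize $\|\Omega^T x\|_2^2$ as an i.i.d.\ sum of Gaussian bilinear forms, then control its deviation from the mean by two layers of chi-square concentration. Writing $x = \vecc(X)$ with $X\in\R^{\hat n\times \tilde n}$ and $\|X\|_F = 1$, the identity $(\tilde\omega_i\kron\hat\omega_i)^T\vecc(X) = \hat\omega_i^T X\tilde\omega_i$ gives
\begin{equation*}
\|\Omega^T x\|_2^2 = \frac{1}{\ell}\sum_{i=1}^\ell Z_i^2,\qquad Z_i := \hat\omega_i^T X\tilde\omega_i.
\end{equation*}
The $Z_i$ are i.i.d. Conditioning on $\tilde\omega_i$ shows that $Z_i \sim \normal(0,\|X\tilde\omega_i\|_2^2)$, so $\EE Z_i^2 = \EE\|X\tilde\omega_i\|_2^2 = \|X\|_F^2 = 1$. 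This immediately yields $\EE\|\Omega^T x\|_2^2 = 1$, which is the second requirement in Definition~\ref{def: JL moment}.

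For the moment bound I would decompose the deviation by first-factor conditioning,
\begin{equation*}
\|\Omega^T x\|_2^2 - 1 = \underbrace{\tfrac{1}{\ell}\sum_{i=1}^\ell \bigl(Z_i^2 - \|X\tilde\omega_i\|_2^2\bigr)}_{=:A} + \underbrace{\tfrac{1}{\ell}\sum_{i=1}^\ell \bigl(\|X\tilde\omega_i\|_2^2 - 1\bigr)}_{=:B},
\end{equation*}
and estimate each summand by a Hanson--Wright / Laurent--Massart-type bound. The term $B$ is an unconditional centred quadratic form in the Gaussian entries of $\tilde\Omega$ with a block-diagonal Hessian whose Frobenius norm is $\sqrt\ell\,\|X^TX\|_F \leq \sqrt\ell$ and operator norm $\|X^TX\| \leq 1$, giving $(\EE|B|^p)^{1/p}\leq K(\sqrt{p/\ell} + p/\ell)$. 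For $A$, conditional on $\tilde\Omega$ one has $\ell A = \sum_i \|X\tilde\omega_i\|_2^2(\chi^2_{1,i}-1)$, a weighted centred chi-square in the $\hat\omega_i$'s; Laurent--Massart conditionally on $\tilde\Omega$ yields $(\EE[|\ell A|^p\mid\tilde\Omega])^{1/p}\leq K\bigl(\sqrt{p\sum_i\|X\tilde\omega_i\|_2^4} + p\max_i\|X\tilde\omega_i\|_2^2\bigr)$. Taking the outer $L^p$-norm in $\tilde\Omega$, controlling $\sum_i\|X\tilde\omega_i\|_2^4$ by its mean ($\leq 3\ell$) via a second Hanson--Wright bound and bounding $\EE[\max_i\|X\tilde\omega_i\|_2^{2p}]^{1/p}$ by a union bound together give $(\EE|A|^p)^{1/p}\leq K'(\sqrt{p/\ell} + p^2/\ell)$.

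Combining both pieces by the triangle inequality produces $\bigl(\EE\bigl|\|\Omega^T x\|_2^2 - 1\bigr|^p\bigr)^{1/p} \leq K''(\sqrt{p/\ell} + p^2/\ell)$. Setting $p = \lceil\tfrac12\log(1/\delta)\rceil$ and using $\delta^{1/p}\geq e^{-2}$ (valid since $\delta\leq e^{-8}$), the requirement $K''(\sqrt{p/\ell}+p^2/\ell) \leq \varepsilon\delta^{1/p}$ splits into the two conditions $\ell\gtrsim \log(1/\delta)\varepsilon^{-2}$ and $\ell\gtrsim \log^2(1/\delta)\varepsilon^{-1}$, matching the two terms in~\eqref{eq:JLmomentbound}. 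The main obstacle is not conceptual but quantitative: carefully propagating the absolute constants through the two Hanson--Wright/Laurent--Massart invocations, through the outer chi-square moments needed to average the weights $\|X\tilde\omega_i\|_2^2$ and their maximum, and through the discrete choice of $p$, in order to obtain the explicit value $C = 128\,e^4$ claimed in the theorem.
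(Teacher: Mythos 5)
Your proposal is a genuinely different route from the paper's, and in broad strokes it would work, but it is heavier and one step is misidentified. The paper writes $\|\Omega^Tx\|_2^2-1=\tfrac1\ell\sum_i(Z_i^2-1)$ as a \emph{single} i.i.d.\ sum and invokes Lata\l{}a's sharp $L^p$ moment inequality (Theorem~\ref{Thm: latala}) together with an explicit bound on the associated Orlicz-type quantity $\vvvert\cdot\vvvert_p$ (Lemma~\ref{lemma: orlicz_upper_bound}), feeding in only the one-dimensional fact $\|Z_i^2-1\|_{L^s}\le 8s^2$ (from Lemma~\ref{lemma:knocker_Ls}), which is where the Khatri--Rao/tensor structure enters. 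You instead condition on the first Gaussian layer $\tilde\Omega$ and split the deviation into a conditional chi-square fluctuation $A$ and a fluctuation of the conditional mean $B$, estimating each via Hanson--Wright/Laurent--Massart. The two strategies both aim at the $\sqrt{p/\ell}+p^2/\ell$ profile, but the paper's single application of Lata\l{}a is tighter and, crucially, makes tracking the absolute constant feasible.

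Two concrete issues with your version. First, the quantity $\sum_i\|X\tilde\omega_i\|_2^4 = \sum_i(\tilde\omega_i^TX^TX\tilde\omega_i)^2$ is a degree-four polynomial in Gaussians, so ``a second Hanson--Wright bound'' does not apply as stated; you would need a Rosenthal/Lata\l{}a-type inequality for sums of heavy-tailed (sub-Weibull with exponent $1/2$) scalars, or Gaussian hypercontractivity, to control its $L^{p/2}$ norm. Once you reach for those tools you are essentially back to the machinery the paper uses, but now in two separate places. Second, the statement being proved is precisely about the explicit constant $C=128e^4$; the qualitative bound $\ell\gtrsim\log(1/\delta)\varepsilon^{-2}+\log^2(1/\delta)\varepsilon^{-1}$ is already Theorem~42 of \cite{ahle2020oblivious}. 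Your plan defers the constant-chasing, which is acceptable as an outline, but be aware that the Lata\l{}a route is what makes that chase manageable: Corollary~\ref{corollary: sum of rv} gives the clean form $4e^2\max\{\tfrac12(2a)^2\sqrt{p\ell},(\ell/p)^{1/p}(ap)^2\}$ with $a=2\sqrt 2$, and the troublesome $\ell^{1/p}$ factor (which also appears in your max-term as $\|\max_i\|X\tilde\omega_i\|_2^2\|_{L^p}\lesssim \ell^{1/p}p$) is dispatched by the observation that whenever the second term dominates, $\ell^{1/p}\le 4$ for $p\ge 4$. Your decomposition would need the analogous trick twice.
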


To compare the bound~\eqref{eq:JLmomentbound} with results previously known in the literature, it is instructive to consider the embedding of a set of vectors rather than a single vector $x$. More precisely, let $X\subseteq \mathbb{R}^n$ contain $N$ vectors of unit norm. We aim to find $\ell$ such that
\begin{equation}
    \label{eq: JL proprety for set}
    \PP{\exists x\in X \colon \left| \|\Omega^T x\|^2_2-1\right|>\varepsilon}\leq \delta
\end{equation}
holds. By applying the union bound to~\eqref{eq: JL lemma}, 
Theorem~\ref{Thm: JLmoment} implies \eqref{eq: JL proprety for set} for Khatri--Rao products of Gaussian matrices provided that 
$
    \ell \sim {\log(N/\delta)}{\varepsilon^{-2}}+\frac{\log^2(N/\delta)}{\varepsilon^{-1}}.
$
This improves the results in~\cite{rakhshan2020tensorized,sun2018tensor}, which require $\ell \sim \log^4(N/\delta) \varepsilon^{-2}$. For fixed $\delta$ and $\varepsilon$, we thus only need $\ell \sim \log^2(N)$ instead of $\ell \sim \log^4(N)$ to embed $X$. 

\subsection{Subspace embedding property for Khatri--Rao product of Gaussian matrices}

In order to turn Theorem \ref{Thm: JLmoment} into an OSE property, we will use approximate matrix multiplication, that is, conditions such that the product of sketched matrices well approximates the product of the original matrices. The following result from~\cite{Cohen2016Optimal} is central for this purpose.

\begin{theorem}
\label{Thm: OSE to AMM}
     Given an integer $d\geq 1$ and real numbers $\varepsilon \in (0,1]$, $\delta\in (0,1/2)$, let $\Omega\in \R^{n \times \ell}$ be a random matrix satisfying the $(\varepsilon/2,\delta/9^{2d},p)$-JL moment property for some $p\geq 2$. Then, for any matrices $A$ and $B$ with $n$ rows, the following bound holds:
    \begin{equation} \label{eq:approxmult}
        \PP{\left\|(\Omega^T A)^T(\Omega^T B)-A^TB\right\|_2>\varepsilon\cdot\sqrt{\left(\|A\|^2_2+{\|A\|^2_F}/{d}\right)\left(\|B\|^2_2+{\|B\|^2_F}/{d}\right)}}<\delta.
    \end{equation}
\end{theorem}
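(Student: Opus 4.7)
My strategy is to lift the per-vector JL moment control to an operator-norm concentration bound via two successive reductions: polarization, which reduces sketched inner products to sketched norms, and a net argument whose cardinality $9^{2d}$ exactly matches the failure-probability budget $\delta/9^{2d}$ supplied by the hypothesis. Writing $E := (\Omega^T A)^T(\Omega^T B) - A^T B$, the polarization identity $4\langle u,v\rangle = \|u+v\|_2^2 - \|u-v\|_2^2$ yields
\begin{equation*}
    x^T E y = \tfrac{1}{4}\Bigl[\bigl(\|\Omega^T w_+\|_2^2 - \|w_+\|_2^2\bigr) - \bigl(\|\Omega^T w_-\|_2^2 - \|w_-\|_2^2\bigr)\Bigr], \qquad w_\pm = Ax \pm By,
\end{equation*}
for any fixed unit vectors $x, y$. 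After rescaling $w_\pm$ to unit norm, the $(\varepsilon/2, \delta/9^{2d}, p)$-JL moment property together with Markov's inequality as in~\eqref{eq: JL lemma} bounds each squared-norm deviation by $(\varepsilon/2)\|w_\pm\|_2^2$ outside an event of probability $\delta/9^{2d}$ per fixed pair $(x,y)$.

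To promote this pointwise estimate to $\|E\|_2$, I would split $A$ (and analogously $B$) into a ``heavy'' part $A_{(d)}$ carrying the top $d$ singular values and a ``tail'' $A_\perp$ satisfying $\|A_\perp\|_2 \le \|A\|_F/\sqrt{d}$, since $\sigma_{d+1}(A)^2 \le \|A\|_F^2/d$. Expanding $E$ along this decomposition produces four pieces indexed by heavy/tail $\times$ heavy/tail. The heavy--heavy piece has both factors constrained to $d$-dimensional subspaces, so a $1/4$-net of the unit sphere in each subspace has at most $9^d$ points; a standard net-to-sphere passage then bounds it by $\varepsilon \|A_{(d)}\|_2 \|B_{(d)}\|_2 \le \varepsilon \|A\|_2 \|B\|_2$, and the $9^{2d}$ net pairs precisely exhaust the probability budget. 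The three remaining pieces are handled by combining the same per-pair moment bound with the operator-norm control on $A_\perp$ and $B_\perp$, producing the mixed contributions $\varepsilon\|A\|_2\|B\|_F/\sqrt{d}$, $\varepsilon\|A\|_F\|B\|_2/\sqrt{d}$ and $\varepsilon\|A\|_F\|B\|_F/d$. Summing the four pieces gives exactly the claimed bound $\varepsilon\sqrt{(\|A\|_2^2 + \|A\|_F^2/d)(\|B\|_2^2 + \|B\|_F^2/d)}$.

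The main obstacle is the net step: a naive $\varepsilon$-net of the entire unit sphere in $\Real^n$ would have union-bound cost exponential in $n$ and would overwhelm the moment budget entirely. The crucial insight is that, after the SVD truncation at level $d$, the supremum defining $\|E\|_2$ can be restricted to pairs of vectors living in $d$-dimensional subspaces, while the tail contributions are absorbed into the second summand of $\|A\|_2^2 + \|A\|_F^2/d$ via Cauchy--Schwarz rather than an additional net. Once this structural reduction is in place, tracking constants so that an $(\varepsilon/2)$-JL moment yields the final $\varepsilon$ in~\eqref{eq:approxmult} is routine bookkeeping.
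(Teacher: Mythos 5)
Your proposal takes a genuinely different route from the paper: the paper's proof of Theorem~\ref{Thm: OSE to AMM} is a two-sentence reduction that cites Lemma~4 of the Cohen et al.\ preprint (lifting the per-vector JL moment property to an $L^p$-moment bound on $\|(\Omega^TU)^T(\Omega^TU)-I\|_2$ for $U$ with $2d$ orthonormal columns) and then Theorem~6 of Cohen (2016) (passing from that moment bound to the approximate-matrix-multiplication tail bound). You instead attempt a first-principles argument via polarization, a head/tail SVD split, and a net. Your polarization step and your handling of the head--head block $(\Omega^TA_{(d)})^T(\Omega^TB_{(d)})-A_{(d)}^TB_{(d)}$ are sound: both factors live in $d$-dimensional subspaces, a $1/4$-net of each sphere has $9^d$ points, and Markov plus the $(\varepsilon/2,\delta/9^{2d},p)$-moment hypothesis, union-bounded over $9^{2d}$ net pairs, does give $\varepsilon\|A\|_2\|B\|_2$.

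The gap is in the three blocks involving a tail factor. For the head--tail block, say, you must bound $\sup\{|\langle\Omega^Ta,\Omega^Tb\rangle-\langle a,b\rangle|\colon a\in\mathrm{range}(A_{(d)}),\ b\in\mathrm{range}(B_\perp)\}$, and $\mathrm{range}(B_\perp)$ can have dimension up to $n-d$, so there is no small net over $b$ and the $9^{2d}$ probability budget is already exhausted on the head--head block. Your ``absorbed via Cauchy--Schwarz'' remark does not supply a mechanism: the obvious fallback $\|\cdot\|_2\le\|\cdot\|_F$ plus a column-by-column JL argument would produce $\varepsilon\|A_{(d)}\|_F\|B_\perp\|_F\le\varepsilon\sqrt{d}\,\|A\|_2\|B\|_F$, which is larger than the required $\varepsilon\|A\|_2\|B\|_F/\sqrt{d}$ by a factor of $d$; and bounding $\|\Omega^TB_\perp\|_2$ directly would need operator-norm control of $\Omega$ on a high-dimensional subspace, which the JL moment property does not give. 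The quantity $\|A\|_2^2+\|A\|_F^2/d$ in the statement actually arises in the cited Cohen argument from a decomposition of $A$ and $B$ into $O(\mathrm{rank}/d)$ blocks of $d$ consecutive singular directions each (not a single head/tail split), using the identity $\sum_i\|A_i\|_2^2\le\|A\|_2^2+\|A\|_F^2/d$, and crucially it sums the per-block contributions at the level of $L^p$ moments (triangle inequality for $\|\cdot\|_{L^p}$) rather than by a union bound, so that the number of blocks does not enter the probability budget. To repair your argument you would need to replace the two-piece split and the union bound over tail pieces with that block decomposition and moment-level summation.
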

\begin{proof}
    Lemma 4 from~\cite{cohen2015optimal_arxiv} connects the JL-moment property with moments of the random variable $\|(\Omega^T U)^T(\Omega^T U)-I\|_2$ from the OSE property~\eqref{eq:ose}. In our setting, 
    as $\Omega$ satisfies the $(\varepsilon/2, \delta/9^{2d}, p)$-JL moment property, this lemma implies 
  $
      \left(\mathbb{E}\|(\Omega^T U)^T(\Omega^T U)-I\|_2^p\right)^{1/p}<\varepsilon\delta^{1/p}
  $
  for any matrix $U\in \mathbb{R}^{n\times 2d}$ with orthonormal columns.
  Any random matrix having this property satisfied~\eqref{eq:approxmult}; see~\cite[Theorem 6]{Cohen2016Optimal}.
\end{proof}

We are now in the position to establish a new OSE property for Khatri--Rao products of Gaussian matrices. 
\begin{theorem}
\label{Thm: subspace_property_Khatri-Rao}
Given an integer $k \geq 1$, real numbers $\varepsilon \in (0,1]$, $\delta\in (0,1/2)$, and $n=\tilde{n}\hat{n}$, consider $\Omega=\frac{1}{\sqrt{\ell}}(\tilde{\Omega} \odot \hat{\Omega})\in \mathbb{R}^{n \times \ell}$ for independent Gaussian random matrices $\tilde{\Omega}\in \mathbb{R}^{\tilde{n}\times \ell}$ and $\hat{\Omega}\in \mathbb{R}^{\hat{n}\times \ell}$. Then $\Omega$ has the $(\varepsilon,\delta,k)$-OSE property, provided that
\begin{align} \label{eq:ellbound}
        \ell &\geq C\cdot \big( {k^{{3}/{2}}} \varepsilon^{-2}+k\log({1}/{\delta}) \varepsilon^{-2} + k^{{1}/{2}}\log^2({1}/{\delta}) \varepsilon^{-1} \big), \quad C= (2000 e^4)^2.
    \end{align} 
\end{theorem}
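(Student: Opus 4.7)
The plan is to deduce the OSE property from the approximate matrix multiplication bound in Theorem~\ref{Thm: OSE to AMM}, fed by the JL moment property from Theorem~\ref{Thm: JLmoment}. First, I observe that for any matrix $U \in \mathbb{R}^{n \times k}$ with orthonormal columns the bound
$$
\left\|(\Omega^T U)^T(\Omega^T U) - I\right\|_2 \leq \varepsilon
$$
is exactly the content of Theorem~\ref{Thm: OSE to AMM} applied with $A=B=U$, noting $A^T B = I$, $\|U\|_2 = 1$ and $\|U\|_F^2 = k$. If we invoke Theorem~\ref{Thm: OSE to AMM} with a parameter $\varepsilon_1 \in (0,1]$ and an integer $d \geq 1$, the right-hand side of~\eqref{eq:approxmult} becomes $\varepsilon_1 (1 + k/d)$. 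So we must choose $\varepsilon_1 = \varepsilon / (1+k/d)$ and this forces us to verify that $\Omega$ satisfies the $\big(\varepsilon_1/2,\, \delta/9^{2d},\, p\big)$-JL moment property for some integer $p \geq 2$.

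Next, Theorem~\ref{Thm: JLmoment} applied with $\varepsilon' = \varepsilon_1/2 = \varepsilon/\bigl(2(1+k/d)\bigr)$ and $\delta' = \delta/9^{2d}$ guarantees the JL moment property (with $p = \lceil \tfrac{1}{2}\log(9^{2d}/\delta) \rceil \geq 2$) as soon as
$$
\ell \;\geq\; 4C_{JL}^2 \,(1+k/d)^2 \log(9^{2d}/\delta)\,\varepsilon^{-2} \;+\; 2C_{JL}\,(1+k/d)\log^2(9^{2d}/\delta)\,\varepsilon^{-1},
$$
where $C_{JL} = 128 e^4$. Using $\log(9^{2d}/\delta) \leq 2d\log 9 + \log(1/\delta)$, this gives us a family of sufficient conditions parameterized by $d$. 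Our freedom in choosing $d$ is what allows us to trade the $(1+k/d)$ penalty from AMM against the $d$-dependent blow-up of the failure probability $\delta/9^{2d}$.

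Then I would balance the two by taking $d = \lceil \sqrt{k}\,\rceil$, so that $1 + k/d \leq 2\sqrt{k}$ and $(1+k/d)^2 \leq 4k$. The first term contributes
$$
k\cdot\bigl(\sqrt{k} + \log(1/\delta)\bigr)\varepsilon^{-2} \;=\; k^{3/2}\varepsilon^{-2} + k\log(1/\delta)\varepsilon^{-2},
$$
matching the first two terms of~\eqref{eq:ellbound}. The second term expands to $\sqrt{k}\,(\sqrt{k} + \log(1/\delta))^2\varepsilon^{-1}$, whose cross terms $k^{3/2}\varepsilon^{-1}$ and $k\log(1/\delta)\varepsilon^{-1}$ are dominated by the corresponding $\varepsilon^{-2}$-terms since $\varepsilon \leq 1$, leaving $\sqrt{k}\log^2(1/\delta)\varepsilon^{-1}$ as the only new contribution.

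The main technical obstacle is bookkeeping the constants precisely enough to land at $C = (2000 e^4)^2$: one must carefully absorb the cross terms from expanding $(\sqrt{k} + \log(1/\delta))^2$, the $2\log 9$ factor coming from the union-bound on the OSE failure, and the slack from rounding $d$ to an integer, and then verify $\delta/9^{2d} \leq e^{-8}$ (so that Theorem~\ref{Thm: JLmoment} is applicable) under the stated hypothesis $\delta < 1/2$. Everything else is an essentially mechanical composition of the two previous theorems.
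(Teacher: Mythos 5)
Your overall plan — compose Theorem~\ref{Thm: OSE to AMM} with the JL moment property of Theorem~\ref{Thm: JLmoment}, with the free parameter $d$ used to trade the $(1+k/d)$ AMM penalty against the $9^{-2d}$ failure-probability scaling — is exactly the strategy the paper uses. Your bookkeeping of the $(\varepsilon_1/2,\delta/9^{2d},p)$ calibration and the absorption of $\varepsilon^{-1}$-cross-terms into the $\varepsilon^{-2}$-terms is also correct in spirit.

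However, there is a genuine gap: your choice $d=\lceil\sqrt{k}\rceil$ does not guarantee the hypothesis $\tilde\delta = \delta/9^{2d}\le e^{-8}$ that Theorem~\ref{Thm: JLmoment} requires. For $k=1$ you get $d=1$, hence $\tilde\delta = \delta/81$, and the stated hypothesis $\delta<1/2$ only gives $\tilde\delta < 1/162 \approx 6.2\times 10^{-3}$, which is \emph{larger} than $e^{-8}\approx 3.4\times 10^{-4}$. So for $\delta\in(81e^{-8},1/2)\approx(0.027,0.5)$, Theorem~\ref{Thm: JLmoment} is simply not applicable with your $d$, and the chain of deductions breaks. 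You do list ``verify $\delta/9^{2d}\le e^{-8}$'' among the things to check, but with your $d$ this check fails rather than needing to be carried out. The paper avoids this by choosing $d=\lceil 2\sqrt{k}\rceil$, which forces $d\ge 2$ and hence $\tilde\delta\le 9^{-4}\delta \le 9^{-4} < e^{-8}$ uniformly over the stated range of $\delta$. This larger $d$ changes the numerical constants throughout (the paper also takes the slightly lossier but simpler $\tilde\varepsilon=\varepsilon/d$ instead of your $\varepsilon/(1+k/d)$), but does not change the asymptotics since $(1+k/d)^2=\mathcal O(k)$ still holds. To repair your argument, replace $d=\lceil\sqrt{k}\rceil$ with $d=\lceil 2\sqrt{k}\rceil$ (or any choice forcing $d\ge 2$), re-run the constant-tracking, and the rest goes through.
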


\begin{proof}
Consider arbitrary $U\in \mathbb{R}^{n \times k}$ with orthonormal columns. The result will be proven by plugging the JL moment property of Theorem~\ref{Thm: JLmoment} into Theorem~\ref{Thm: OSE to AMM}  with $A = B = U$. It remains to select the parameters appropriately.

We choose $d=\big\lceil2\sqrt{k}\big\rceil$, $\tilde{\varepsilon} = \varepsilon / d$, $\tilde{\delta} = 9^{-2d} \delta$ and verify that the conditions of
Theorem~\ref{Thm: JLmoment} are satisfied for these choices.
Clearly, $0 < \tilde{\varepsilon} \leq 1$, and
$
    0 < \tilde{\delta} = 9^{-2 \lceil2\sqrt{k}\rceil} \delta \leq 9^{-4} \leq e^{-8}.
$
The following sequence of inequalities shows that the condition~\eqref{eq:ellbound} for $\delta, \varepsilon$ implies~\eqref{eq:JLmomentbound} for $\tilde \delta, \tilde \varepsilon/2$:
\begin{align*}
 & (128e^4)^2\log({1}/{\tilde{\delta}})(\tilde{\varepsilon}/2)^{-2}+128e^4\log^2({1}/{\tilde{\delta}})(\tilde{\varepsilon}/2)^{-1}  \\
 = &(256 e^4)^2 d^2 \big( 2 d \log 9 + \log(1/\delta) \big) \varepsilon^{-2}+256 e^4 d \big( 2d \log 9 + \log(1/\delta) \big)^2 \varepsilon^{-1}  \\
 \leq & (2000 e^4)^2 k^{{3}/{2}} \varepsilon^{-2} + (620 e^4)^2 k \log(1/\delta) ( \varepsilon^{-2} +  \varepsilon^{-1} )   + 620 e^4 k^{1/2} \log^2(1/\delta)  \varepsilon^{-1}  \\
 \leq & C\cdot \big( {k^{{3}/{2}}} \varepsilon^{-2}+k\log({1}/{\delta}) \varepsilon^{-2} + k^{{1}/{2}}\log^2({1}/{\delta}) \varepsilon^{-1} \big),
\end{align*}
where the first inequality uses simple comparisons like $2\log 9 \cdot d^3 \le 60 k^{3/2}$ and the second inequality uses $\varepsilon, \delta \le 1$.
Thus, all requirements of Theorem \ref{Thm: JLmoment} are met and $\Omega$ has the $(\tilde{\varepsilon}/2, \delta/9^{2d}, p)$-JL moment property with $p = \lceil\frac{1}{2}\log(1/\tilde{\delta})\rceil$. Theorem \ref{Thm: OSE to AMM} with $A=B=U$ states that
\[
     \PP{\|(\Omega^T U)^T(\Omega^T U)-I\|_2 > \tilde{\varepsilon} \left(\|U\|^2_2+{\|U\|^2_F}/{d}\right)} < \delta.
\]
Because of $\tilde{\varepsilon} \left(\|U\|^2_2+{\|U\|^2_F}/{d}\right) = \varepsilon / d \cdot (1+{k}/{d}) \le \varepsilon$, this implies $\PP{\|(\Omega^T U)^T(\Omega^T U)-I\|_2>\varepsilon} < \delta$ and completes the proof.
\end{proof}
For fixed $\varepsilon$, Theorem \ref{Thm: subspace_property_Khatri-Rao} establishes $(\varepsilon, \delta, k)$-OSE for $\ell\sim {k^{{3}/{2}}}+k\log({1}/{\delta})+ k^{{1}/{2}}\log^2({1}/{\delta})$. In passing, we note that the straightforward combination of the usual epsilon-net argument~\cite[Section 4]{Vershynin2018High-dimensional} with the distributional JL property from Theorem~\ref{Thm: JLmoment} would lead to a significantly worse estimate: $\ell \sim k^2{\log^2(1/\delta)}+{k\log(1/\delta)}$.

Random Gaussian matrices require $\ell \sim k+\log(1/\delta)$ samples in order to be an OSE \cite{MatouOnvariants2008}. The asymptotic dependence on $k$ established by Theorem~\ref{Thm: subspace_property_Khatri-Rao} is only modestly worse, which indicates that Khatri--Rao products of Gaussian random matrices can deliver comparable performance.
This is what we also observe empirically, as demonstrated by the following example.
\begin{example} \label{example:pseudoinverse}
In the next section, we will primarily employ the result of Theorem~\ref{Thm: subspace_property_Khatri-Rao} to control $\|(\Omega^T U)^\dagger\|_2$, using that $\|(\Omega^T U)^T(\Omega^T U)-I\|_2\leq \varepsilon$ implies $\|(\Omega^T U)^\dagger\|_2 \le 1 / (1-\varepsilon)$; see, e.g.,~\cite[Lemma 5.36]{VershyninBookChapter12}.
We perform a numerical experiment to illustrate how the values of $\ell$ and $k$ affect $\|(\Omega^T U)^\dagger\|_2$ for the two cases when $\Omega$ is a Gaussian random matrix and when
$\Omega$ is a Khatri--Rao product of Gaussian random matrices. 

In the left plot of Figure \ref{fig: empirical probability}, we randomly generate a matrix $U\in \mathbb{R}^{400\times k}$ with orthonormal columns by computing a QR-factorization of a Gaussian random ${400\times k}$ matrix for $k=4,\ldots, 20$. Then, we find the smallest $\ell$ such that the empirical probability of the event $\|(\Omega^T U)^\dagger\|_2\geq 5$ is smaller than $1/50$; this is done by generating 1000 independent trials of $\Omega\in \mathbb{R}^{400\times \ell}$. The plot clearly shows that for all $k$, the number of samples $\ell$ needed by random Khatri--Rao matrices is almost the same as the number of samples needed by unstructured random matrices $\Omega$.

In the right plot, we have changed the matrix $U$ so that each of its columns is the vectorization of a rank-one matrix: we first orthogonalize an $20\times 20$ random matrix to obtain $V=[v_1,\ldots, v_{20}] \in \mathbb{R}^{20\times 20}$. Then we set $U = [u \otimes v_1, \ldots, u \otimes v_{k}]\in \mathbb{R}^{400\times k}$, where $u\in \mathbb{R}^{20}$ is a randomly generated unitary vector. We observe that the random Khatri--Rao products now require a notably larger number of samples $\ell$ compared to their Gaussian counterparts. This modest increase appears to be in line with the result of Theorem~\ref{Thm: subspace_property_Khatri-Rao}. 
The observation that random Khatri--Rao products tend to perform worse when sketching rank-one vectors has been made before in related settings~\cite{BujanovicKressner21,meyer2023hutchinson}.

\begin{figure}[h]
\includegraphics[width=0.5\textwidth]{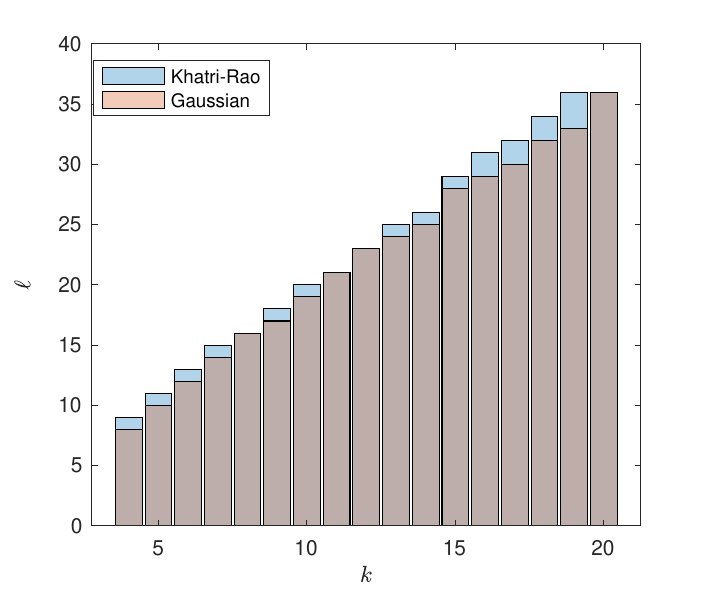}
\hspace{\fill}
\includegraphics[width=0.5\textwidth]{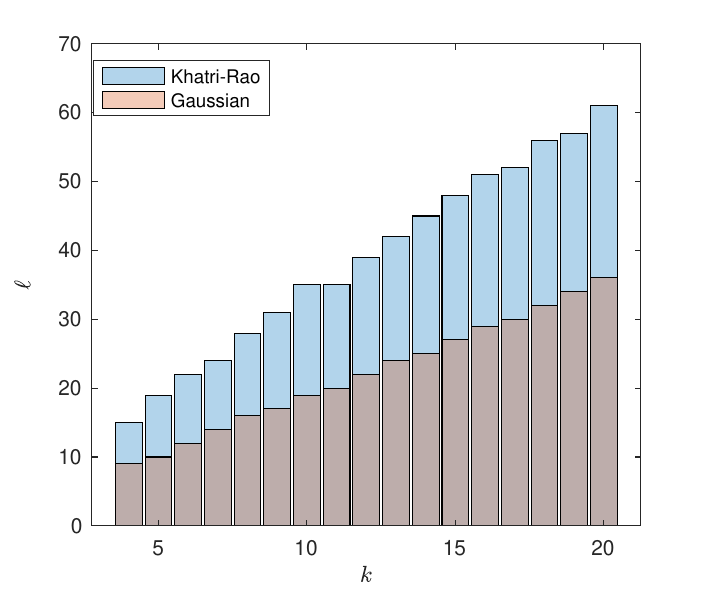}
\caption{The smallest number of samples $\ell$ in $\Omega \in \R^{400 \times \ell}$ such that the empirical probability of the event $\|(\Omega^T U)^\dagger\|_2\geq 5$ is smaller than $1/50$. Here $U \in \R^{400 \times k}$ has orthonormal columns, $k=4,\ldots, 20$. Left: $U$ is randomly generated; right: $U$ has rank-one vectors as columns.} 
\label{fig: empirical probability}
\end{figure}

In some applications, like the randomized SVD, one can cheaply mitigate the effect of a moderately large value for $\|(\Omega^T U)^\pinv\|_2$. In Figure $\ref{fig: norm of pseudo inverse}$, we therefore explore how large this norm will be if one is permitted to use a prescribed number of samples $\ell$. We fix the matrix $U \in \R^{400 \times 8}$, and generate 1000 independent random Gaussian and random Khatri--Rao product matrices $\Omega\in \mathbb{R}^{400 \times \ell}$ with $\ell=8,\ldots, 28$. The plots depict the largest, the $95$-th percentile, and the median values of $\|(\Omega^T U)^\dagger\|_2$ obtained from these trials. Once again, we have two plots that differ by the way of generating $U$ in the same way as in the Figure \ref{fig: empirical probability}. In the left plot, we observe that the Khatri--Rao products yield practically identical values as those obtained using Gaussian matrices. In the right plot, the Khatri--Rao products perform slightly worse than the Gaussian matrices. In both cases we see that $\|(\Omega^T U)^\dagger\|_2$ is bounded by a modest constant once $\ell \gtrsim 2k$.
\begin{figure}[h]
\includegraphics[width=0.5\textwidth]{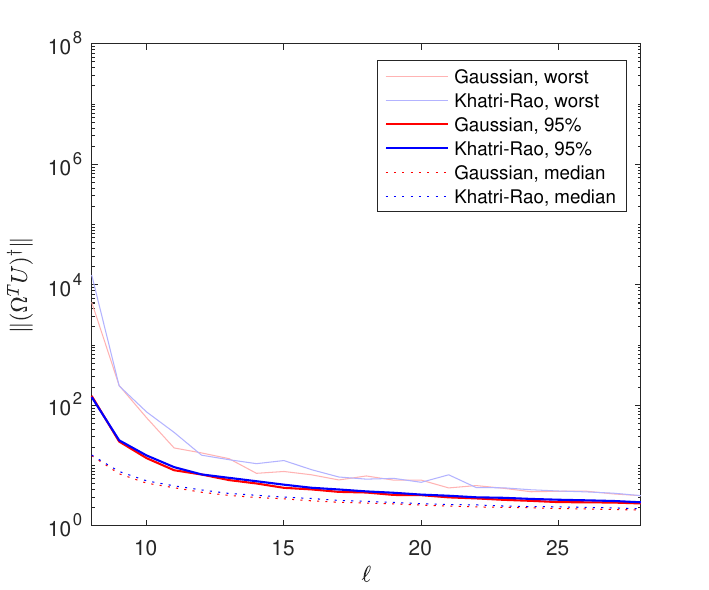}
\hspace{\fill}
\includegraphics[width=0.5\textwidth]{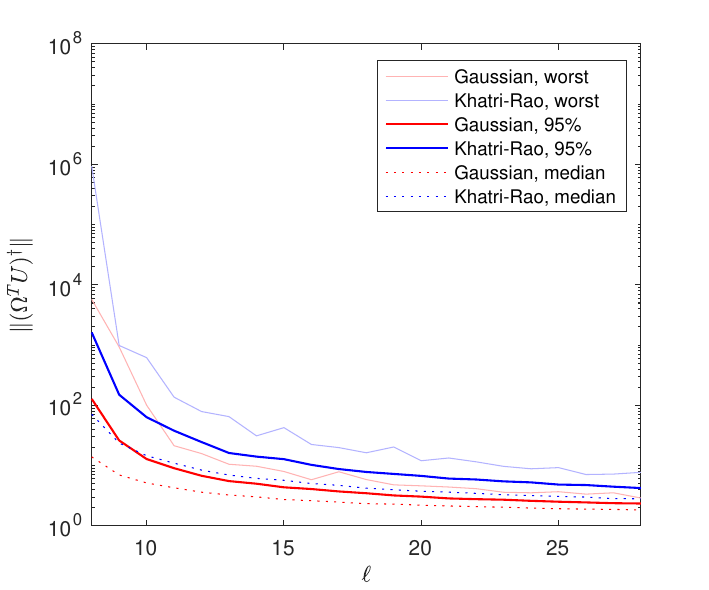}
\caption{
Statistics for $\|(\Omega^T U)^\dagger\|_2$ in $1000$ trials with a random matrix $\Omega \in \R^{400 \times \ell}$. Here $U \in \R^{400 \times 8}$ is a fixed matrix with orthonormal columns. Left: $U$ is randomly generated; right: $U$ has rank-one vectors as columns.}
\label{fig: norm of pseudo inverse}
\end{figure}

\end{example}

\section{Application of random Khatri--Rao matrices to contour integral eigensolvers}
\label{sec:contour}
In this section, we apply random Khatri--Rao product matrices to the solution of large-scale generalized eigenvalue problems. More specifically, for symmetric matrices  $A,B \in \R^{n \times n}$ with $B$ positive definite we consider the problem of computing all eigenvalues of the matrix pencil $(A, B)$ that lie inside a given contour $\Gamma \subseteq \Complex$, along with the corresponding eigenvectors. It is well known that the spectral projector $P_\Gamma$ associated with these eigenvalues can be represented by the contour integral
$
    P_\Gamma = \frac{1}{2\pi \mathrm{i}} \int_\Gamma (zB - A)^{-1}B \, \textrm{d}z.
$
Hence, for generic $\overline{\Omega} \in \R^{n \times \ell}$, the subspace spanned by the columns of
\begin{equation} \label{eq:integral}
    P_{\Gamma} \overline{\Omega} = \frac{1}{2\pi \mathrm{i}} \int_\Gamma (zI - A)^{-1} B\overline{\Omega} \, \mathrm{d}z
\end{equation}
coincides with the invariant subspace spanned by the desired eigenvectors, provided that
$\ell \ge k$, where $k$ is the number of eigenvalues inside $\Gamma$. 

As explained in the introduction, a number of eigensolvers are based on approximating the integral in~\eqref{eq:integral} by numerical quadrature, such as the trapezoidal. This effectively replaces the projector with a rational filter: $P_{\Gamma} \approx \rho(B^{-1}A)$ where the weights $w_i$ and poles $z_i$ of the filter function $\rho(z) = \frac{1}{2 \pi \mathrm{i}} \sum_{i=1}^q \frac{w_i}{z_i - z}$ are determined by the quadrature rule. Equivalently, $\rho$
can be viewed as an approximation of the indicator function on the interior of $\Gamma$~\cite{FEAST-SubspaceIteration}.
Setting $\Omega = B^{1/2} \overline{\Omega}$, we thus arrive at the approximation
\begin{equation}
    \label{eq:quadrature}    
    P_{\Gamma} \overline{\Omega} = P_{\Gamma} B^{-1/2}\Omega \approx \rho(B^{-1}A) B^{-1/2}\Omega 
    = \frac{1}{2\pi \mathrm{i}}\sum_{i=1}^q w_i (z_i B - A)^{-1} B^{1/2}\Omega 
\end{equation}
The evaluation of $P_{\Gamma} \overline{\Omega}$ thus reduces to solving $q$ shifted linear systems $(z_i B - A)^{-1} B^{1/2}\Omega$.
To mitigate the quadrature error and improve accuracy, the popular FEAST method~\cite{FEAST-Polizzi,FEAST-SubspaceIteration} applies $\rho(B^{-1}A)$ repeatedly in a subspace iteration, but we will not make use of this technique in order to fully exploit the structure we impose on $\Omega$.

Suppose that the eigenvalues $\lam_1, \ldots, \lam_n$ of $(A, B)$ are ordered such that
$$
    |\rho(\lam_1)| \geq \ldots \geq |\rho(\lam_{k})| \geq |\rho(\lam_{k+1})| \geq \ldots \geq |\rho(\lam_n)|.
$$
For a sufficiently good filter $\rho$, we expect that $\lambda_1,\ldots, \lambda_k$ are the eigenvalues inside $\Gamma$ and $|\rho(\lam_{k})| \gg |\rho(\lam_{k+1})|$. Also, we expect that the eigenvectors associated with these eigenvalues are nearly contained in the subspace spanned by the columns of $\rho(B^{-1}A)B^{-1/2}\Omega$.
Theorem~\ref{thm:structural} below provides bounds that justify such statements. It also shows how the choice of $\Omega$ affects the quality of the obtained approximation. Note that we use angles in the scalar product induced by $B$: For nonzero vectors $u$, $z$, we decompose $z = u + u_\perp$ such that $u^T B u_\perp = 0$ and set $\tan \angle_B (u, z) := \|u_\perp\|_B / \|u\|_B$, where $\|u\|_B = \sqrt{u^T B u}$.
For a subspace $\mathcal{Z}$, we define $\tan \angle_B (u, \mathcal{Z}) := \min_{z \in \mathcal{Z}} \tan \angle_B (u, z)$. 
Similar results, focusing mainly on the role of the filter are well known in the literature, see, e.g.,  \cite[Theorem 2.2]{Guettel2015} in the context of the FEAST method.
\begin{theorem} 
    \label{thm:structural}
    With the notation introduced above, let us assume that $\ell \geq k$ and $|\rho(\lam_k)| > |\rho(\lam_{k+1})|$. Let $u_1, \ldots, u_n$ denote $B$-orthonormal eigenvectors corresponding to $\lam_1, \ldots, \lam_n$, respectively, and let $U = B^{1/2} [u_1, \ldots, u_k]$ and $U_{\perp} = B^{1/2} [u_{k+1}, \ldots, u_n]$.
    If the matrix $U^T \Omega \in \R^{k \times \ell}$ has full row rank, then
    \begin{equation}
        \label{eq:tm-structural-tan}
        \tan \angle_B( u_j, \spann(Z)) 
            \leq \| \rho(\Lam_{\perp}) (U_\perp^T \Omega) (U^T \Omega)^\pinv e_j\|_2 / |\rho(\lambda_j)|, \quad  1 \leq j \leq k,
    \end{equation}
    with $Z = \rho(B^{-1}A) B^{-1/2} \Omega$. Here $\Lam_{\perp} = \diag(\lam_{k+1}, \ldots, \lam_n)$, and  $e_j \in \R^k$ is the $j$th unit vector.
\end{theorem}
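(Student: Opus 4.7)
The plan is to exhibit, for the given $j$, a specific vector $z = Zc \in \spann(Z)$ admitting a decomposition $z = u_j + r$ with $u_j^T B r = 0$, and then to read off the claimed tangent bound directly from this decomposition via the stated definition of $\tan \angle_B$. Since $\tan \angle_B(u_j, \spann(Z)) \le \tan \angle_B(u_j, Zc)$, a single well-chosen $c$ suffices.

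First I would exploit the $B$-orthonormality of the eigenvectors. Setting $V := [u_1, \ldots, u_n]$, the matrix $B^{1/2} V = [U,\ U_\perp]$ is Euclidean orthogonal, so $V^{-1} = V^T B$ and $B^{-1}A = V \Lam V^{-1}$ with $\Lam = \diag(\lam_1, \ldots, \lam_n)$. Substituting into $Z = \rho(B^{-1}A) B^{-1/2} \Omega$ and using $V^T B^{1/2} = [U, U_\perp]^T$ splits the sketch cleanly as
\[
Zc = [u_1, \ldots, u_k]\, \rho(\Lam_1)\, (U^T \Omega)\, c + [u_{k+1}, \ldots, u_n]\, \rho(\Lam_\perp)\, (U_\perp^T \Omega)\, c,
\]
where $\Lam_1 = \diag(\lam_1, \ldots, \lam_k)$. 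The two summands now live in $B$-orthogonal eigensubspaces, which is the key structural feature to exploit.

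Next I would pick $c$ so that the first summand equals $u_j$. The hypothesis $|\rho(\lam_k)| > |\rho(\lam_{k+1})| \geq 0$ forces $\rho(\lam_i) \ne 0$ for all $i \le k$, and since $U^T \Omega$ has full row rank by assumption, the choice $c := (U^T \Omega)^\pinv e_j / \rho(\lam_j)$ yields $(U^T \Omega)\, c = e_j / \rho(\lam_j)$ and therefore $\rho(\Lam_1)(U^T \Omega)\, c = e_j$. The second summand is then the residual
\[
r = [u_{k+1}, \ldots, u_n]\, \rho(\Lam_\perp)\, (U_\perp^T \Omega)\, (U^T \Omega)^\pinv e_j \,/\, \rho(\lam_j),
\]
which automatically satisfies $u_j^T B r = 0$ because $r \in \spann\{u_{k+1}, \ldots, u_n\}$.

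Finally, the $B$-orthonormality of $\{u_{k+1}, \ldots, u_n\}$ makes the $B$-norm of $r$ coincide with the Euclidean norm of its coefficient vector, so $\|r\|_B = \|\rho(\Lam_\perp)(U_\perp^T \Omega)(U^T \Omega)^\pinv e_j\|_2 / |\rho(\lam_j)|$. Combining with $\|u_j\|_B = 1$ and applying the definition of $\tan \angle_B$ to the decomposition $Zc = u_j + r$ produces precisely the right-hand side of \eqref{eq:tm-structural-tan}, and passing to the minimum over $\spann(Z)$ completes the argument. I do not expect a major obstacle; the only subtle bookkeeping is keeping straight the interplay between the $B$-inner product on the eigenvector side and the Euclidean inner product on the sketched side, which is cleanly mediated by the orthogonality of $[U, U_\perp]$.
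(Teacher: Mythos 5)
Your proposal is correct and follows essentially the same route as the paper: you derive the same block split of $Z$ into the eigencomponents along $\spann\{u_1,\ldots,u_k\}$ and $\spann\{u_{k+1},\ldots,u_n\}$, pick the same coefficient vector $c=(U^T\Omega)^\pinv e_j/\rho(\lam_j)$ (the paper writes it as $(U^T\Omega)^\pinv\rho(\Lam)^{-1}e_j$), and finish with the same $B$-orthogonality argument. The one small point you make explicit that the paper leaves implicit is that $|\rho(\lam_k)|>|\rho(\lam_{k+1})|\geq 0$ guarantees $\rho(\lam_j)\neq 0$ for $j\leq k$, so the division is legitimate.
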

\begin{proof}
    Let $\Lam = \diag(\lam_1, \ldots, \lam_k)$. Since $\rho(B^{-1}A)u_j = \rho(\lam_j)u_j$, we have 
    \begin{equation}
        \label{eq:tm-structural-rhoBV}
        \rho(B^{-1}A)B^{-1/2} \mb{cc} U & U_\perp \me
            = B^{-1/2} \mb{cc} U & U_\perp \me 
                \mb{cc} 
                    \rho(\Lam) & \\ 
                    & \rho(\Lam_\perp) 
                \me.
    \end{equation}
    The columns of $[\ U\ U_\perp\ ]$ %
    form an orthonormal basis of $\R^n$, and multiplying \eqref{eq:tm-structural-rhoBV} with $[\ U\ U_\perp\ ]^T \Omega$ from the right leads to
    \begin{equation}
        \label{eq:tm-structural-Z}
        Z = B^{-1/2} U \rho(\Lam) U^T \Omega + B^{-1/2} U_{\perp} \rho(\Lam_{\perp}) U_\perp^T \Omega.
    \end{equation}
    Using that $U^T \Omega$ is of full row rank and multiplying~\eqref{eq:tm-structural-Z} by $(U^T \Omega)^\pinv \rho(\Lam)^{-1} e_j$ from the right implies
    $$
        \spann(Z) \ni z 
            := u_j + \frac{1}{\rho(\lam_j)} B^{-1/2}U_{\perp} \rho(\Lam_{\perp}) (U_\perp^T \Omega) (U^T \Omega)^\pinv e_j.
    $$
    Since $u_j$ is $B$-orthogonal to $B^{-1/2}U_{\perp}$ and $B^{-1/2}U_{\perp}$ has $B$-orthonormal columns, it follows that
    $$
        \tan \angle_B( u_j, z ) 
            = \| B^{-1/2} U_{\perp} \rho(\Lam_{\perp}) (U_\perp^T \Omega) (U^T \Omega)^\pinv e_j\|_B / |\rho(\lambda_j)| 
            = \|\rho(\Lam_{\perp}) (U_\perp^T \Omega) (U^T \Omega)^\pinv e_j\|_2 / |\rho(\lambda_j)|,
    $$ 
    concluding the proof.
\end{proof}

The role of $\rho$ and $\Omega$ becomes more obvious by further bounding \eqref{eq:tm-structural-tan}:
\begin{equation}
    \label{eq:tm-structural-tan-split}
    \tan \angle_B( u_j, \spann(Z) ) 
        \leq \frac{|\rho(\lambda_{k+1})|}{|\rho(\lambda_j)|} \|U_\perp^T \Omega\|_2 \|(U^T \Omega)^\pinv\|_2
        \leq \frac{|\rho(\lambda_{k+1})|}{|\rho(\lambda_j)|} \|\Omega\|_2 \|(\Omega^T U)^\pinv\|_2.
\end{equation}
The ratio ${|\rho(\lambda_{k+1})|}/{|\rho(\lambda_j)|}$ features prominently in this bound and, by choosing
an appropriate filter $\rho$ (e.g., the one obtained from the trapezoidal rule), it decreases exponentially as the number of quadrature nodes $q$ increases~\cite{TrefethenWeideman14}.

We will now discuss the effect of choosing a random Khatri--Rao product $\Omega = \tilde{\Omega} \kr \hat{\Omega}$ on the bound~\eqref{eq:tm-structural-tan-split}.  This is simple for the norm of $\Omega$:
$$
    \|\Omega\|_2^2
        = \|\tilde{\Omega} \kr \hat{\Omega}\|_2^2
        = \|(\tilde{\Omega} \kr \hat{\Omega})^T (\tilde{\Omega} \kr \hat{\Omega})\|_2
        = \|(\tilde{\Omega}^T \tilde{\Omega}) \hadprod (\hat{\Omega}^T \hat{\Omega})\|_2
        \leq \|\tilde{\Omega}^T \tilde{\Omega}\|_2 \|\hat{\Omega}^T \hat{\Omega}\|_2
        = \|\tilde{\Omega}\|_2^2 \|\hat{\Omega}\|_2^2,
$$
where the symbol $\hadprod$ denotes the Hadamard product of two matrices. Well-known results~\cite[Corollary 5.35]{VershyninBookChapter12} on norms of Gaussian random matrices state that
$$
    \|\tilde{\Omega}\|_2 \leq \sqrt{\tilde{n}} + \sqrt{\ell} + t, \quad
    \|\hat{\Omega}\|_2 \leq \sqrt{\hat{n}} + \sqrt{\ell} + t,  
$$
hold with probability at least $1 - 2 \exp(-t^2)$. The potentially large matrix size appearing in this bound can be easily mitigated by taking a few more quadrature nodes. For bounding $\|(\Omega^T U)^\pinv\|_2$, we will use Theorem \ref{Thm: subspace_property_Khatri-Rao} to conclude the following result.

\begin{corollary} \label{corollary:contour}
In the setting of Theorem~\ref{thm:structural}, let $\Omega = \tilde{\Omega} \kr \hat{\Omega}$ for independent Gaussian random matrices $\tilde{\Omega} \in \R^{\tilde n \times \ell}$, $\hat{\Omega} \in \R^{\hat n \times \ell}$ with $\ell \geq C(4k^{3/2} + 24k + 72 k^{1/2})$, where $C$ is the constant from Theorem \ref{Thm: subspace_property_Khatri-Rao}. 
    Assume that $\sqrt{\ell} + 4 \leq \min\{\sqrt{\tilde{n}}, \sqrt{\hat{n}}\}$, and that the filter $\rho$ is such that $|\rho(\lam_{k+1})|/|\rho(\lam_k)| < \varepsilon \sqrt{\ell / 64n}$ for some $0 < \varepsilon < 1$.
    Then, for all $1 \leq j \leq k$,
    $$
        \PP{ \tan \angle_B( u_j, \spann(Z)) \leq \varepsilon} > 0.997.
    $$
\end{corollary}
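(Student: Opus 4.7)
The plan is to begin with the deterministic estimate~\eqref{eq:tm-structural-tan-split} from Theorem~\ref{thm:structural} and control its two random factors $\|\Omega\|_2$ and $\|(\Omega^T U)^\pinv\|_2$ separately, then combine the resulting inequality with the assumed filter decay. A useful observation that simplifies the bookkeeping is that the product $\|\Omega\|_2 \|(\Omega^T U)^\pinv\|_2$ is invariant under scalar rescaling of $\Omega$; this makes it harmless that the corollary uses the unscaled $\Omega = \tilde{\Omega} \kr \hat{\Omega}$ while Theorem~\ref{Thm: subspace_property_Khatri-Rao} is phrased for the scaled matrix $\frac{1}{\sqrt{\ell}}(\tilde{\Omega} \kr \hat{\Omega})$.

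For the spectral norm, I would use $\|\Omega\|_2 \leq \|\tilde{\Omega}\|_2 \|\hat{\Omega}\|_2$ (derived in the discussion preceding the corollary) together with the Gaussian concentration bound $\|\tilde{\Omega}\|_2 \leq \sqrt{\tilde{n}} + \sqrt{\ell} + t$, and similarly for $\hat{\Omega}$, choosing $t = 4$ so that each bound holds with probability at least $1 - 2e^{-16}$. Under the hypothesis $\sqrt{\ell} + 4 \leq \min\{\sqrt{\tilde{n}}, \sqrt{\hat{n}}\}$ this yields $\|\Omega\|_2 \leq 2\sqrt{\tilde{n}}\cdot 2\sqrt{\hat{n}} = 4\sqrt{n}$ with probability at least $1 - 4e^{-16}$. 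For the pseudo-inverse factor, I would invoke Theorem~\ref{Thm: subspace_property_Khatri-Rao} on $\Omega_0 = \frac{1}{\sqrt{\ell}}\Omega$ with parameters $\varepsilon_0 = 1/2$ and $\delta_0 = e^{-6}$: substituting these values, the lower bound in~\eqref{eq:ellbound} becomes exactly $C(4k^{3/2} + 24k + 72 k^{1/2})$, matching the hypothesis on $\ell$. Hence, outside an event of probability at most $e^{-6}$, the $(\varepsilon_0, \delta_0, k)$-OSE property together with the implication recalled in Example~\ref{example:pseudoinverse} gives $\|(\Omega_0^T U)^\pinv\|_2 \leq 1/(1-1/2) = 2$, i.e., $\|(\Omega^T U)^\pinv\|_2 \leq 2/\sqrt{\ell}$.

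Combining the two estimates yields $\|\Omega\|_2 \|(\Omega^T U)^\pinv\|_2 \leq 8\sqrt{n/\ell}$ on the intersection of the good events. Using $|\rho(\lambda_j)| \geq |\rho(\lam_k)|$ for $j \leq k$ together with the filter hypothesis $|\rho(\lam_{k+1})|/|\rho(\lam_k)| < \varepsilon\sqrt{\ell/(64n)}$, the right-hand side of~\eqref{eq:tm-structural-tan-split} is bounded by $\varepsilon\sqrt{\ell/(64n)} \cdot 8\sqrt{n/\ell} = \varepsilon$. A union bound over the OSE failure event and the two spectral-norm failure events gives a total failure probability of at most $e^{-6} + 4e^{-16} < 0.003$, which yields the stated probability.

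The argument is essentially an exercise in aligning constants, and the only non-routine step is recognizing that the precise choice $\log(1/\delta_0) = 6$ is what makes $4k\log(1/\delta_0) + 2 k^{1/2}\log^2(1/\delta_0) = 24k + 72 k^{1/2}$, thereby synchronizing the hypothesis of Theorem~\ref{Thm: subspace_property_Khatri-Rao} (for $\varepsilon_0 = 1/2$) with the hypothesis on $\ell$ in the corollary. The numerical conclusion $0.997$ then follows from $1 - e^{-6} - 4e^{-16}$; any tighter control on the failure probability would require either a smaller $\delta_0$ (which would force a larger $\ell$) or sharper Gaussian norm concentration.
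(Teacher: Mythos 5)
Your proposal matches the paper's own proof essentially line for line: the same invocation of Theorem~\ref{Thm: subspace_property_Khatri-Rao} on the rescaled $\frac{1}{\sqrt{\ell}}\Omega$ with $\varepsilon_0=1/2$, $\delta_0=e^{-6}$ (whose $\ell$-threshold indeed evaluates to $C(4k^{3/2}+24k+72k^{1/2})$), the same Gaussian norm concentration with $t=4$ yielding $\|\Omega\|_2\le 4\sqrt{n}$, the same union bound, and the same combination with~\eqref{eq:tm-structural-tan-split}. The only cosmetic difference is that you track each spectral-norm failure at $2e^{-16}$ whereas the paper rounds this up to $10^{-6}$; both yield $1-e^{-6}-4e^{-16}>0.997$.
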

\begin{proof}
Note that
    $
        \PP{ \|\hat{\Omega}\|_2 > 2\sqrt{\hat{n}} } 
            \leq \PP{ \|\hat{\Omega}\|_2 > \sqrt{\hat{n}} + \sqrt{\ell} + 4 } 
            \leq 2e^{-4^2} < 10^{-6}
    $
    and an analogous bound holds for $\|\tilde{\Omega}\|_2$. 
    Using Theorem \ref{Thm: subspace_property_Khatri-Rao} with 
    $\overline{\Omega} = \frac{1}{\sqrt{\ell}} \Omega = \frac{1}{\sqrt{\ell}} \tilde{\Omega} \kr \hat{\Omega}$,
    $\varepsilon=1/2$, and $\delta = e^{-6}$, we obtain
    $$
        \mathbb P\{ \|(\Omega^T U)^\pinv\|_2 > 2/\sqrt{\ell} \}
            = \mathbb P\{ \|(\overline{\Omega}^T U)^\pinv\|_2 > 2 \}
            \leq \mathbb P\{ \| (\overline{\Omega}^T U)^T (\overline{\Omega}^T U) - I \|_2 > 1/2 \}
            \leq e^{-6}
            < 0.0024.
    $$
Using the union bound, we thus have that $\mathbb P\{ \|\Omega\|_2 \|(\Omega^T U)^\pinv\|_2 \leq 8\sqrt{n/\ell} \}$ is bounded from below by 
    \[ 1 
                - \mathbb P\{ \|\tilde{\Omega}\|_2 > 2\sqrt{\tilde{n}} \}
                - \mathbb P\{ \|\hat{\Omega}\|_2 > 2\sqrt{\hat{n}} \}
                - \mathbb P\{ \|(\Omega^T U)^\pinv\|_2 > 2/\sqrt{\ell} \} 
            > 0.997.
\]
Combined with the bound~\eqref{eq:tm-structural-tan-split} from Theorem \ref{thm:structural}, this gives
    \begin{align*}
        \PP{ \tan \angle_B( u_j, \spann(Z) ) \leq \varepsilon }
            &\geq 
                \PP{ \frac{|\rho(\lambda_{k+1})|}{|\rho(\lambda_j)|} \|\Omega\|_2 \|(\Omega^T U)^\pinv\|_2 \leq \varepsilon } \\
            &\geq 
                \PP{ \|\Omega\|_2 \|(\Omega^T U)^\pinv\|_2 \leq 8\sqrt{n/\ell} } 
            > 
                0.997.
    \end{align*}
\end{proof}

Corollary~\ref{corollary:contour} shows that the contour integral method using random Khatri--Rao matrices along with a good quadrature formula results in good eigenvector approximation, provided that a modest amount of oversampling is used. In view of the experiments reported in Example~\ref{example:pseudoinverse}, the constant involved in the asymptotic relation $\ell \sim  k^{3/2}$ is certainly a gross overestimate.

\begin{remark}
For a Gaussian random matrix $\Omega$, one can directly combine the structural bound of Theorem~\ref{thm:structural} (instead of the weaker bound~\eqref{eq:tm-structural-tan-split}) with the probabilistic analysis from~\cite{halko2011finding} developed in the context of the randomized SVD.
This results in a probabilistic bound analogous to Corollary~\ref{corollary:contour} for $\ell \sim k$.
Related results, based on a different structural bound, have been presented by Miedlar~\cite{MiedlarFOCM23}. %
\end{remark}

\subsection{Efficient implementation} \label{sec:implementation}

As mentioned in the introduction, one major benefit of using Khatri--Rao products is that they can be efficiently multiplied with Kronecker products. Contour integral methods involve the inversion of matrices, see~\eqref{eq:quadrature}, which makes it more difficult to exploit Khatri--Rao product structure. To demonstrate how this can be achieved, we now consider a model problem that is typical for eigenvalue problems featuring Kronecker product structure.

We consider a two-dimensional Schr\"{o}dinger equation
\begin{align*}
    -\Delta u(x, y) + V(x, y) \cdot u(x, y) &= \lam u(x, y), && (x, y) \in D = [a, b] \times [a, b], \\
    u(x, y) &= 0, && (x, y) \in \partial D,
\end{align*}
where $V(x, y)$ is a given potential. Using finite elements to discretize this equation would lead to a generalized eigenvalue problem $Ax=\lam Bx$ with matrices $A$ and $B \neq I$ both represented as short sums of Kronecker products. For simplicity, we use a finite differences discretization that yields $B=I$ and leads to a standard eigenvalue problem $Av = \lam v$ with
\begin{equation}
\label{eq:discretized_eigenvalue_problem}
    A = -(I \kron T + T \kron I) + \diag (V(x_i, y_j)) \in \R^{\hat{n} \tilde{n} \times \hat{n} \tilde{n}}.
\end{equation}
Here $T = \text{tridiag}(1, -2, 1) / h^2$, $x_i = a + h i$, $y_j = a + h j$, and $h = (b-a) / (\tilde{n}+1)$ where $\tilde{n} = \hat{n}$ is the number of discretization points for each coordinate. In order to obtain Kronecker structure for the last term in~\eqref{eq:discretized_eigenvalue_problem}, the potentially needs to represented or approximated as a sum of separable functions. In particular, if the potential has the form $V(x, y) = f(x) + f(y) \pm g(x)g(y)$, then 
\begin{equation}
\label{eq:discretized_eigenvalue_problem_kron_form}
    A = I \kron K + K \kron I + \tilde{V} \kron \hat{V},
\end{equation}
where $K = -T + \diag(f(x_i))$, $\tilde{V} = \pm \diag(g(x_i))$, $\hat{V} = \diag(g(x_i))$.

The contour integral method described above requires solving shifted linear systems of the form $(z I - A)x = \omega$ for each quadrature node $z \in \Complex$ and each column $\omega = \tilde{\omega} \kron \hat{\omega}$ of $\Omega$. A straightforward and often feasible idea is to apply a sparse direct solver~\cite{Davis06} to such a system. However, for large $A$, it will be beneficial to exploit the Kronecker structure and rewrite
\begin{equation}
    \label{eq:sylvester-unfolded}
    \left(I \kron \left(\frac{z}{2}I - K\right) + \left(\frac{z}{2}I - K\right) \kron I - \tilde{V} \kron \hat{V} \right) x = \tilde{\omega} \kron \hat{\omega}.
\end{equation}
as a matrix equation. For this purpose, we let $\Vec: 
\R^{\hat n \times \tilde n} \to \R^{\hat n \tilde n}$ denote vectorization, which stacks the columns of a matrix into a long vector. The inverse of $\Vec$ is denoted by $\Mat: \R^{\hat n \tilde n} \to \R^{\hat n \times \tilde n}$.
Letting $X = \Mat(x)$ and using that $\Mat(\tilde{\omega} \kron \hat{\omega}) = \hat{\omega} \tilde{\omega}^T$, we obtain a multiterm Sylvester equation of the form
\begin{equation}
    \label{eq:sylvester-folded}
    \left(\frac{z}{2} I - K\right) X + X\left(\frac{z}{2} I - K\right) - \hat{V} X \tilde{V} = \hat{\omega} \tilde{\omega}^T.
\end{equation}
It is often observed (see, e.g.,~\cite{BennerBreiten13}) that the solution of such an equation with rank-one 
right-hand side is again numerically low-rank. Most methods for large-scale matrix equations operate under this assumption and approximate $X$ in factored form: $X \approx \hat{X} \tilde{X}^\ast$. This clearly highlights the benefit of random Khatri--Rao matrices; for an unstructured random matrix $\Omega$, both the right-hand side of~\eqref{eq:sylvester-folded} and the solution $X$ are numerically full rank; see also Figure \ref{fig:feast_sylvester_solution_decay}.

To solve \eqref{eq:sylvester-folded}, we have adapted the preconditioned low-rank BiCGstab algorithm from \cite{BennerBreiten13} to multiterm Sylvester equations. The preconditioner consists of applying a few iterations of the ADI method (implemented as described in~\cite{Kuerschner16}) to the Sylvester matrix equation
$$
    \left(\frac{z}{2} I - K\right) X + X\left(\frac{z}{2} I - K\right) = \hat{P} \tilde{P}^\ast,
$$
for the low-rank right-hand sides appearing in the course of BiCGstab.
The whole procedure is summarized in Algorithm \ref{alg:feast}.

\begin{algorithm}
    \caption{Contour integration for eigenvalues of $A = I \kron K + K \kron I + \tilde{V} \kron \hat{V}$ inside a contour $\Gamma$.}
    \label{alg:feast}
    \algorithmicrequire Matrices $K, \hat{V}, \tilde{V} \in \mathbb{R}^{\tilde{n} \times \tilde{n}}$; nodes $z_i$ and weights $w_i$ of a quadrature formula for contour $\Gamma$, $i=1, \ldots, q$; upper estimate $\ell$ on number of eigenvalues inside $\Gamma$.\\
    \algorithmicensure Approximations $(\lam_i, u_i)$ to eigenpairs of $A$ for which $\lam_i$ is inside $\Gamma$.
    \begin{algorithmic}[1]
        \State Generate random Gaussian matrices $\tilde{\Omega} \in \R^{\tilde{n} \times \ell}$, $\hat{\Omega} \in \R^{\tilde {n} \times \ell}$ 
        \State Initialize $\tilde{X}_j$, $\hat{X}_j$ as empty matrices,  $j=1, \ldots, \ell$.
        \For{$i=1, 2, \ldots, q$} 
            \For{$j=1, 2, \ldots, \ell$} 
                \State Let $\tilde{\omega}_j=\tilde{\Omega}(:, j)$, $\hat{\omega}_j=\hat{\Omega}(:, j)$.
                \State Solve multiterm Sylvester eqn $(\frac{z_i}{2}I - K)X + X(\frac{z_i}{2}I - K) - \hat{V} X \tilde{V} = \hat{\omega}_j \tilde{\omega}_j^\ast$ for $X = \hat{X}_{i, j}\tilde{X}_{i, j}^\ast$.
                \State Expand $\tilde{X}_{j} = [\tilde{X}_{j}, \, \frac{w_i}{2\pi \mathrm{i}} \tilde{X}_{ij}]$, $\hat{X}_{j} = [\hat{X}_{j}, \, \hat{X}_{ij}]$.
                \Comment{Note: Optional recompression.} \label{alg:contour:expand}
            \EndFor
        \EndFor
        \State Compute the matrices $X_j = \hat{X}_j \tilde{X}_j^\ast \in \R^{\hat{n} \times \tilde{n}}$ and unfold to vectors $x_j = \Vec(X_j) \in \R^n$, $j = 1, \ldots, \ell$.
        \State Compute and return Ritz pairs ($\lam_i, u_i)$ of $A$ from the subspace $\spann\{x_1, \ldots, x_\ell\}$, see Remark \ref{rem:RitzPairs}.
    \end{algorithmic}
\end{algorithm}

\begin{remark}
    \label{rem:RitzPairs}  
    Algorithm \ref{alg:feast} applies the quadrature formula \eqref{eq:quadrature} to each column of $\Omega \in \R^{n \times \ell}$ separately, storing the results as factored matrices $X_j = \hat{X}_j \tilde{X}_j^\ast$, $j=1, \ldots, \ell$. 

    In Line \ref{alg:contour:expand} we update the partial sum with the term for the next quadrature node; to control the number of columns in $\tilde{X}_j$, $\hat{X}_j$, we may do a compression of these factors by computing a truncated SVD of $\tilde{X}_j \hat{X}_j^\ast$. This can be done efficiently by first computing QR decompositions of $\tilde{X}_j$ and $\hat{X}_j$.
    
    To obtain approximate eigenpairs, we have to compute Ritz pairs from the subspace spanned by the vectors $x_1, \ldots, x_\ell$ with $x_j = \Vec(X_j)$. 
    Special care needs to be taken so that this computation involves efficient matrix-vector products $Ax$ that respect the low-rank structure of $x$. 
    This can be done by refactoring $X_j$ as $X_j = U\Sigma_j V^\ast$, where $U$ and $V$ are common for all $j$. This block low-rank format and operations with it will be described in detail in Section \ref{sec:lobpcg}.
\end{remark}

In the following, we illustrate the algorithm introduced above with a numerical example.
\begin{example}
    \label{ex:contour}
    We consider the domain $[a,b] \times [a,b] = [-1,1] \times [-1,1]$ and the potential 
    $
        V(x, y) = (x^2 + y^2 - xy)/2.
    $
    and the goal is to compute the $4$ smallest eigenvalues of the matrix $A$; these eigenvalues lie inside the circle $\Gamma$ centered at $12.606$ with radius $9$. 
    
    We generated the matrix $\Omega$ as a Khatri--Rao product of Gaussian random matrices with $\ell = 6$ columns. The integral \eqref{eq:quadrature} is approximated by the trapezoidal rule with $40$ nodes.
    
    When the number of discretization points is set to $\tilde{n} = \hat{n} = 300$, the matrix $A$ is of size $90000 \times 90000$. 
    First, we compute the four smallest eigenvalues with {\tt eigs} in Matlab, matricize each of the associated eigenvectors into four $300 \times 300$ matrices and compute their singular values. Figure~ \ref{fig:feast_eigvec_decay} shows that the singular values drop rapidly, and that each of the eigenvectors is well approximated by a low-rank vector. Moreover, the image of all these matrices is well contained in a common low-dimensional subspace. This justifies the expectation that the final rank of the matrices $\tilde{X}_j$ and $\hat{X}_j$ in Algorithm~\ref{alg:feast} will be similarly low.

    Figure~\ref{fig:feast_sylvester_solution_decay} shows the impact on the singular values of the solution to the multiterm Sylvester equation \eqref{eq:sylvester-folded} with $z=12.606 + 9 e^{i \pi/4}$, when choosing a random matrix with Khatri--Rao structure vs. an unstructured random matrix.  Clearly, the former has rapidly decaying singular values and can be efficiently stored in a low-rank factored format, while the latter does not offer such benefits.
      
    The benefit of using a specialized low-rank solver becomes more apparant as the size increases:
    \begin{center}
        \renewcommand{\arraystretch}{1.2}
        \begin{tabular}{|c|c|c||c|c|c|c|}
            \hline 
            $\tilde{n}=\hat{n}$ & $1000$ & $2000$ & $3000$ & \makecell{worst \\[-0.1cm] $\|Ax-\lam x\|$} & \makecell{worst \\[-0.1cm] $|\lam_i - \tilde{\lam}_i|$} & memory \\ \hline \hline
            sparse direct           & $8.35$s  & $46.47$s & $138.59$s & $1 \cdot 10^{-7}$ & $6 \cdot 10^{-10}$ & $27.9$ GB \\ \hline
            BiCGstab tol=$10^{-6}$  & $5.86$s  & $12.81$s & $19.59$s  & $7 \cdot 10^{-3}$ & $1 \cdot 10^{-8}$  & $400$ MB\\ \hline
            BiCGstab tol=$10^{-10}$ & $15.85$s & $30.97$s & $56.58$s  & $6 \cdot 10^{-7}$ & $6 \cdot 10^{-10}$ & $400$ MB \\ \hline
        \end{tabular}
    \end{center}
    For varying number of discretization points $\tilde{n}=\hat{n} \in \{1000, 2000, 3000\}$ we measured the average time per quadrature node needed for solving the linear system~\eqref{eq:sylvester-unfolded}. The first row refers to the sparse direct solver utilized by Matlab's backslash operator, the second row refers to our adapted BiCGstab algorithm from~\cite{BennerBreiten13} stopping once the residual norm falls below $10^{-6}$; in the third row with the stricter stopping criterion ($10^{-10}$). 
    Using any of these solvers, we were able to find $4$ eigenvalue approximations inside the contour $\Gamma$. The table also shows the largest of all eigenpair residuals among the computed approximations, the largest of all errors in the computed eigenvalues, and the total memory used in the case $\tilde{n} = \hat{n} = 3000$. As the problem size increases, so does the benefit of using a specialized solver: both time and especially memory usage become significantly smaller than with backslash. When the BiCGstab tolerance in Algorithm \ref{alg:feast} was set to $10^{-6}$, the matrices $\tilde{X}_j$ and $\hat{X}_j$ that store the accumulated sum constantly had $29$ columns, up to the last iteration of the for-loop when the number of columns dropped to $6$ (recompression was performed during each expansion). The ADI preconditioner was configured to run at most $55$ iterations, stopping if the relative residual drops below $10^{-5}$. The maximum rank of the iteration matrices within BiCGstab was limited to $90$.

    \begin{figure}
        \centering
        \begin{subfigure}{0.48\textwidth}
            \includegraphics[width=\textwidth]{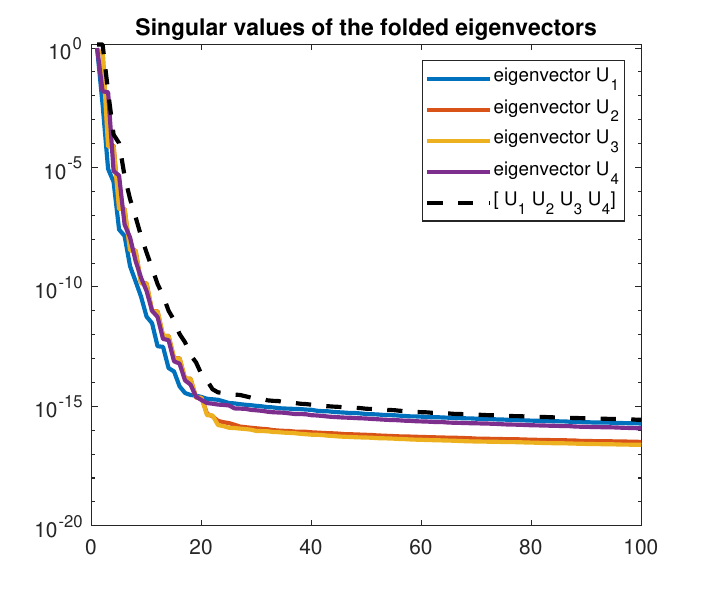}
            \caption{Singular value decay of matricized eigenvectors belonging to $4$ smallest eigenvalues.}
            \label{fig:feast_eigvec_decay}
        \end{subfigure}
        \hfill
        \begin{subfigure}{0.48\textwidth}
            \includegraphics[width=\textwidth]{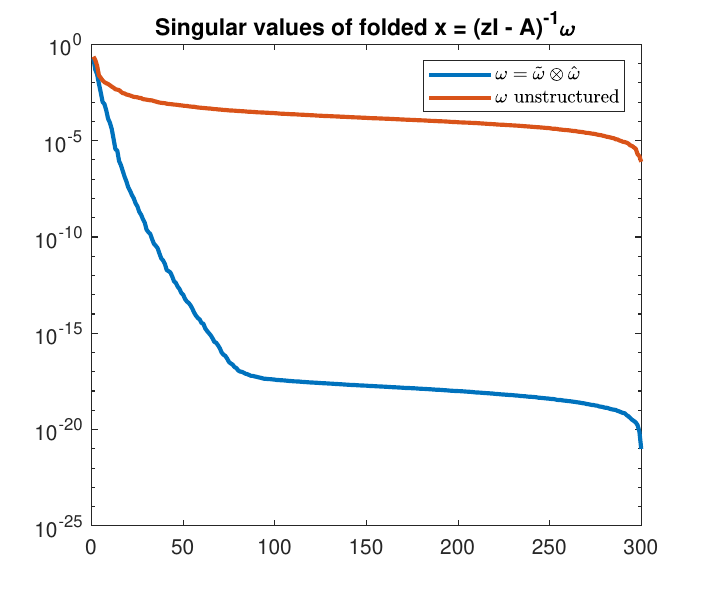}
            \caption{Singular value decay of solution to multiterm Sylvester equation \eqref{eq:sylvester-unfolded} for different right-hand sides $\omega$.}
            \label{fig:feast_sylvester_solution_decay}
        \end{subfigure}
                
        \caption{Properties of the eigenvalue problem arising from a discretized Schr\"{o}dinger equation with potential $V(x, y) = (x^2 + y^2 - xy)/2$.}
        \label{fig:figures}
    \end{figure}
\end{example}

Although the results of the numerical experiment appear to be promising; we observed at the same time that the ability of existing multiterm Sylvester methods to efficiently solve~\eqref{eq:sylvester-unfolded} is limited. In particular, while experimenting with different potentials $V$, we noticed that the BiCGstab solver often requires a very good preconditioner (that is, many ADI iterations) in order to reach convergence. Reaching convergence also sometimes failed for certain quadrature nodes, or required careful tweaking of the algorithm's parameters. Any improvement in algorithms for solving multiterm Sylvester equations would immediately lead to improvement of our proposed contour integration method for eigenvalue computation.

We also remark that Algorithm \ref{alg:feast} can be easily adapted to contour integration methods that employ higher moments, such as Beyn's method~\cite{Beyn} or the block Sakurai--Sugiura method~\cite{Sakurai-FirstPaper,Sakurai-Block}.

\section{Low-rank variant of LOBPCG}
\label{sec:lobpcg}

In view of the challenges encountered with numerically solving multiterm Sylvester equations arising in the countour integral method, we now consider a method that allows us to bypass the need for solving such equations by directly incorporating the preconditioner into the eigensolver. Among such preconditioned methods, LOBPCG~\cite{Knyazev2001Toward} is arguably the most popular one for computing extreme eigenvalues of symmetric positive definite matrices. %

LOBPCG is based on the local optimization of a three-term block recurrence. Given a symmetric positive definite matrix $A\in \mathbb{R}^{n \times n}$, the goal is to compute the $k$ smallest eigenvalues $0<\lambda_1\leq\cdots\leq \lambda_k$ together with the corresponding eigenvectors. For simplifying the description, we only consider standard eigenvalue problems (that is, $B = I_n$) in the following. The extension to general symmetric positive definite $B$ is straightforward~\cite{Knyazev2001Toward}.
 
Given a (randomly chosen) initial matrix $X^{(1)}\in \mathbb{R}^{n\times \ell}$, $\ell\geq k$, with orthonormal columns, LOBPCG computes a sequence of iterates of the form
\begin{equation*}
X^{(i+1)}=X^{(i)}C_1^{(i+1)}+R^{(i)}C^{(i+1)}_2+P^{(i)}C_3^{(i+1)},\quad \text{for } i=1,2,3,\ldots,
\end{equation*} where $R^{(i)}=M^{-1}\big(AX^{(i)}-X^{(i)}(X^{(i)})^TAX^{(i)}\big)$, and $M$ is the chosen preconditioner. The block $P^{(i)}$ is defined as through the sequence
$$P^{(i+1)}=P^{(i)}C_2^{(i+1)}+R^{(i)}C^{(i+1)}_3,$$ with $P^{(1)}$ being an empty block. Denoting $S^{(i)} := [X^{(i)}\;R^{(i)}\; P^{(i)}] \in \mathbb{R}^{n\times 3\ell}$,
the matrices $C_1^{(i+1)}, C_2^{(i+1)}, C_3^{(i+1)}$ defining these recursions are chosen as the block rows of the 
the matrix $C^{(i+1)} \in \mathbb{R}^{3\ell \times \ell}$ that minimizes 
\begin{equation}
\label{eq: min trace}
\min_{C} \big\{ \text{trace}\big((S^{(i)}C)^TAS^{(i)}C\big) \colon (S^{(i)}C)^T(S^{(i)}C)=I \big\}.
\end{equation} 
It is well known that a minimizer of \eqref{eq: min trace} is found by solving the (small) generalized eigenvalue problem for the matrix pencil $(S^{(i)})^TAS^{(i)} - \lam (S^{(i)})^TS^{(i)}$. Specifically, $C^{(i+1)}$ contains the eigenvectors belonging to the $\ell$ smallest eigenvalues. If these eigenvalues are arranged on the diagonal of the 
$\ell\times \ell$ matrix $\Theta$, we have the relation
\begin{equation}
\label{eq: generalized eigenvalues problem}
  (S^{(i)})^TAS^{(i)} C^{(i+1)}= (S^{(i)})^TS^{(i)} Y \Theta.
\end{equation} 
For a detailed discussion of numerical aspects of this procedure, see~\cite{Duersch2018robust}.  
 In particular, it is important to note that numerical instability may arise due to the ill-conditioning of $S^{(i)}$ when solving \eqref{eq: generalized eigenvalues problem}. Often, this is mitigated by orthogonalizing $S^{(i)}$ using, e.g., the Hetmaniuk-Lehoucq orthogonalization strategy~\cite{Hetmaniuk2006Basis}. 

\subsection{Exploiting low-rank and Kronecker structure}

In this section we propose a variant of LOBPCG that will exploit 
the assumed Kronecker structure of the matrix $A$ by storing the iterates in a compatible low-rank format. By doing so, we also implicitly assume that the target eigenvectors are of low-rank, as already mentioned in Example \ref{ex:contour}. This will allow for an efficient implementation of all steps of the algorithm.

The proposed algorithm is inspired by~\cite[Algorithm 2]{kressner2011preconditioned}, which discusses the case $\ell = 1$ for tensor operators $A$. In each iteration, we represent the matrices $X^{(i)}$, $P^{(i)}$ and $R^{(i)}$ in block low-rank matrix format, which will be denoted by bold face letters: $\mathbf{X}^{(i)}$, $\mathbf{P}^{(i)}$ and $\mathbf{R}^{(i)}$. This format is the matrix analogue of the format used in \cite{KressnerSteinlechnerUschmajew14} for storing multiple low-rank tensors. 
A high-level pseudocode of the proposed low-rank variant of LOBPCG in shown in Algorithm \ref{alg:lobpcg}. It remains to discuss details concering the block low-rank format.

\begin{algorithm}[h]
\caption{LOBPCG with low-rank truncation}\label{alg:lobpcg}
\algorithmicrequire Functions for applying $A$, $M^{-1}$ to vectors in block low-rank matrix format; $\ell$ starting vectors $\mathbf{X}^{(0)}$ stored in block low-rank matrix format; block low-rank matrix truncation operator $\mathcal{T}$; number $k$ of desired smallest eigenvalues.\\
\algorithmicensure  Matrix $\tilde{\mathbf{X}}$ containing converged eigenvectors, and diagonal matrix $\tilde{\Lambda}$ containing converged eigenvalues of $A$.
     \begin{algorithmic}[1]
     \State Orthonormalize $\mathbf{X}^{(0)}$: $L_X=\text{chol}\big((\mathbf{X}^{(0)})^T \mathbf{X}^{(0)}\big)$, \quad $\mathbf{X}^{(0)}=\mathbf{X}^{(0)}L_X^{-1}$. \Comment{see \eqref{eq: gram matrix} and \eqref{eq: blk matrix mutiplication}}
     \State Solve the eigenvalue problem $(\mathbf{X}^{(0)})^{T}A\mathbf{X}^{(0)}C=C\Theta$.
     \State $\mathbf{X}^{(1)}=\mathbf{X}^{(0)}C$,\quad$\mathbf{P}^{(1)}=[]$.
      \For{$i=1,2,3,\ldots,\text{(until $k$ smallest eigenvalues have converged)}$}
        \State $\mathbf{R}^{(i)}=M^{-1}\big(A\mathbf{X}^{(i)}-\mathbf{X}^{(i)}\Theta\big)$,
        \quad\quad $\mathbf{R}^{(i)}=\mathcal{T}(\mathbf{R}^{(i)}).$
        \label{alg:lobpcg:precondotioner}
         \State Orthonormalize $\mathbf{R}^{(i)}$: $L_R=\text{chol}\big((\mathbf{R}^{(i)})^T \mathbf{R}^{(i)}\big)$, \quad $\mathbf{R}^{(i)}=\mathbf{R}^{(i)}L_R^{-1}$. 
          \State Orthonormalize $\mathbf{P}^{(i)}$: $L_P=\text{chol}\big((\mathbf{P}^{(i)})^T \mathbf{P}^{(i)}\big)$, \quad $\mathbf{P}^{(i)}=\mathbf{P}^{(i)}L_P^{-1}$.
          
        \State $\mathbf{S_1}=\mathbf{X}^{(i)},\quad \mathbf{S_2}=\mathbf{R}^{(i)},\quad \mathbf{S_3}=\mathbf{P}^{(i)}$.
        \State $\Tilde{A}_{ij}=\mathbf{S_i}^T (A \mathbf{S_j}), \quad \Tilde{B}_{i,j}= \mathbf{S_i}^T\mathbf{S_j}$, \quad $i, j=1, 2, 3$. \Comment{Form the matrices in \eqref{eq: generalized eigenvalues problem}}
        \State Compute $\ell$ smallest eigenvalues and their eigenvectors: $\Tilde{A} C=\Tilde{B}C \Theta$.
        \State Partition $C=\begin{bmatrix}
C_1\\
C_2\\
C_3
\end{bmatrix}.$
        \State $\mathbf{P}^{(i+1)}=\mathbf{R}^{(i)}C_2+\mathbf{P}^{(i)}C_3$,\quad\quad $\mathbf{P}^{(i+1)}=\mathcal{T}(\mathbf{P}^{(i+1)})$.
        \State $\mathbf{X}^{(i+1)}= \mathbf{X}^{(i)}C_1+ \mathbf{P}^{(i+1)}$,\quad\quad $\mathbf{X}^{(i+1)}=\mathcal{T}(\mathbf{X}^{(i+1)})$.      
        \EndFor
    \State $\tilde{\Lam} = \Theta$, $\tilde{\mathbf{X}} = \mathbf{X}^{(i)}$.
     \end{algorithmic}
\end{algorithm}

\begin{paragraph}{Definition of block low-rank format.}
A matrix $W = [w_1, \ldots, w_\ell] \in \R^{n \times \ell}$ with $n = \hat{n} \tilde{n}$ is stored in block low-rank format as a triplet $\mathbf{W} = (U, \Sigma, V)$, where $U \in \mathbb{R}^{\hat{n} \times \hat{r}}$, $V \in \mathbb{R}^{\tilde{n} \times \tilde{r}}$, and the order-3 tensor $\Sigma\in \mathbb{R}^{\hat{r} \times \tilde{r}\times \ell} $ are such that 
\begin{equation} \label{eq:decompwj}
    w_j=\Vec{\big(U\Sigma(j)V^T\big)}, \quad \text{for } j=1,\ldots \ell.
\end{equation} 
Here $\Sigma(j)\in \mathbb{R}^{\hat{r}\times \tilde{r}}$ is a slice of $\Sigma$: $\Sigma(j)_{i_1,i_2}=\Sigma_{i_1,i_2,j}$. 
An equivalent way of viewing~\eqref{eq:decompwj} is to reshape each $w_j$ as an $\hat{n} \times \tilde{n}$ matrix $W_j$, such that $w_j = \Vec(W_j)$, which yields $W_j = U\Sigma(j)V^T$. Yet another way to view~\eqref{eq:decompwj} is to reshape $W$ as an $\hat{n} \times \tilde{n} \times \ell$ tensor and consider~\eqref{eq:decompwj} as a Tucker decomposition of multilinear rank at most $(\tilde{r}, \hat r, \ell)$; see~\cite{Kolda2009}.
In the following, the numbers $\tilde{r}$ and $\hat{r}$ are referred to as ranks; they determine the storage efficiency of the format.
\end{paragraph}

\begin{paragraph}{Application of operators.}
 The format described above is particularly suitable when working with matrices that have Kronecker product structure. Assume that $A = \sum_{i=1}^s \tilde{A}_i \kron \hat{A}_i$, where $\tilde{A}_i \in \R^{\tilde{n} \times \tilde{n}}$, $\tilde{A}_i \in \R^{\hat{n} \times \hat{n}}$. To compute the product $Z=AW$ where $W$ and $Z$ are stored in block low-rank format, which we indicate using $\mathbf{Z} = A \mathbf{W}$, note that
$$
    Aw_j
        = \sum^s_{i=1} (\tilde{A}_i \otimes \hat{A}_i)w_j
        = \sum^s_{i=1} (\tilde{A}_i \otimes \hat{A}_i)\Vec{(U\Sigma(j)V^T)}
        = \sum^s_{i=1} \Vec{\big((\hat{A}_i{U})\Sigma(j)(\tilde{A}_i V)^T\big)}.
$$ 
The product $\mathbf{Z} = (\bar{U}, \bar{\Sigma}, \bar{V})$ is therefore computed as
$$
    \bar{U} = [\hat{A}_1 U, \;\ldots, \; \hat{A}_s U ], 
    \quad 
    \bar{V} = [\tilde{A}_1 V, \;\ldots, \; \tilde{A}_s V ],
    \quad 
    \bar{\Sigma}(j) = \diag(\Sigma(j), \Sigma(j), \ldots, \Sigma(j)). 
$$
When $\max\{\hat{r},\tilde{r}\}$ and $s$ are much smaller than $\max\{\hat{n},\tilde{n}\}$, the complexity of computing $AW$ is significantly reduced from $\mathcal{O}(\ell \hat{n}^2\tilde{n}^2)$ to $\mathcal{O}\big(\ell s(\hat{n}^2\hat{r}+\tilde{n}^2\Tilde{r}+\tilde{n}\hat{n}\min\{\hat{r},\tilde{r}\})\big)$.
\end{paragraph}

\begin{paragraph}{Addition.}
 The addition of two matrices $\mathbf{W}_1 = (U_1, \Sigma_1, V_1)$ and $\mathbf{W}_2 = (U_2, \Sigma_2, V_2)$, denoted by $\mathbf{W} = \mathbf{W}_1 + \mathbf{W}_2$, can be done similarly: The $j$-th column of $W_1+W_2$ is
$$
    \Vec\left(
        \mb{cc} U_1 & U_2 \me 
        \mb{cc} \Sigma_1(j) & 0 \\ 0 & \Sigma_2(j) \me 
        \mb{cc} V_1 & V_2 \me^T
    \right),
$$
so we can set $\mathbf{W} = (U, \Sigma, V)$ with $U = [U_1, \, U_2]$, $V = [V_1, \, V_2]$, $\Sigma(j) = \textrm{diag}(\Sigma_1(j), \Sigma_2(j))$. 
\end{paragraph}

\begin{paragraph}{Recompression.}
 The two operations described above have the undesired effect that the ranks of the result increase compared to the ranks of the inputs; leading to reduced storage and computational efficiency of the format. To control the growth of the ranks, one can perform truncation. For that purpose, we exploit the relation to order-3 tensor discussed above, and perform a higher-order singular value decomposition~\cite{De2000multilinear,Tucker1966Some} where the truncation is limited to the first two modes. We summarize the procedure in Algorithm~\ref{alg:block trucation}. Given a truncation tolerance $\epsilon$ and maximal rank $r_{max}$, Algorithm \ref{alg:block trucation} returns the truncated block low-rank matrix $\mathcal{T}(\mathbf{X})$. The algorithm attempts to ensure $\|\mathcal{T}(\mathbf{X})-\mathbf{X}\|_F \leq \epsilon \|\mathbf{X}\|_F$ while imposing a hard upper limit $\max\{\tilde{r},\hat{r}\}\leq r_{\max}$ for the ranks of the truncated matrix. We used the $n$-mode product of a tensor and a matrix in the last line of Algorithm \ref{alg:block trucation} to calculate the tensor $\Sigma_T$. The definition of $n$-mode product between the order-$N$ tensor $S\in \mathbb{R}^{I_1\times I_2 \times\cdots \times I_n\times\cdots \times I_N}$ and the matrix $M\in \mathbb{R}^{ J_n \times I_n}$ is defined as $S \times_n M \in \mathbb{R}^{I_1\times I_2 \times\cdots \times J_n\times\cdots \times I_N}$ where
\begin{equation}
    \label{eq:definition of n mode}
     (S \times_n M)_{i_1,\cdots,i_{n-1},j_n, i_{n+1}, \cdots, i_{N}}=\sum_{i_n=1}^{I_n}S_{i_1,\cdots ,i_{n-1},i_n ,i_{n+1},\cdots, i_{N}}M_{j_n,i_n}.
\end{equation}
\end{paragraph}

\begin{algorithm}[H]
\caption{Truncation $\mathcal{T}$ of a block low-rank matrix}\label{alg:block trucation}
\algorithmicrequire $X\in \mathbb{R}^{n \times \ell}$ in block low-rank matrix format $\mathbf{X}=(U, \Sigma, V)$; truncation tolerance $\epsilon$; maximal rank $r_{\max}$.\\
\algorithmicensure  $\mathcal{T}(\mathbf{X}) = (U_T, \Sigma_T, V_T)$ in block low-rank matrix format with $U_T \in \mathbb{R}^{\hat{n} \times \hat{r}}$, $V_T \in \mathbb{R}^{\tilde{n} \times \tilde{r}}$ and $\Sigma_T\in \mathbb{R}^{\hat{r} \times \tilde{r}\times \ell}$ such that $\max\{\tilde{r},\hat{r}\}\leq r_{max}$ and 
$\mathcal{T}(\mathbf{X}) \approx \mathbf{X}$.

     \begin{algorithmic}[1]
     \State Compute economy-size QR factorization $U=Q_uR_u$.
     \State Compute economy-size QR factorization $V=Q_vR_v$.
     \State Compute SVD $[R_u\Sigma(1)R_v^T,\cdots, R_u\Sigma(\ell
     )R_v^T]=U_1S V_1^T$.
     \State Find integer ${r}$ such that $\sqrt{(\|S\|_F^2-\sum_{i=1}^{{r}} S^2_{ii})} \leq \frac{\epsilon}{\sqrt{2}} \|S\|_F$ and set $\tilde{r}=\min\{r,r_{max}\}$.
     \State Set $U_T=Q_uU_1(:,1:\Tilde{r})$.
     \State Compute SVD $[R_v\Sigma(1)^TR_u^T,\cdots, R_v\Sigma(\ell)^TR_u^T]=U_2S V_2^T$.
     \State Find integer ${r}$ such that $\sqrt{(\|S\|_F^2-\sum_{i=1}^{{r}} S^2_{ii})} \leq \frac{\epsilon}{\sqrt{2}} \|S\|_F$ and set $\hat{r}=\min\{r,r_{max}\}$.
     \State Set $V_T=Q_vU_2(:,1:\hat{r})$.
     \State Compute $\Sigma_T=\Sigma\times_1 U_1(:,1:\Tilde{r})^T \times_2 U_2(:,1:\hat{r})^T$\Comment{See \eqref{eq:definition of n mode} for definition.} 

     \end{algorithmic}
\end{algorithm}

\begin{paragraph}{Block inner product.}
 We next consider the computation of the matrix $Z=W_1^T W_2$ where $W_1$, $W_2$ are stored in block low-rank format, and the result is stored in an array, i.e., we compute $Z=\mathbf{W_1}^T \mathbf{W_2}$. 
If $\mathbf{W}_1 = (U_1, \Sigma_1, V_1)$ and $\mathbf{W}_2 = (U_2, \Sigma_2, V_2)$, we observe that
    \begin{align*}       
        Z_{i_1, i_2} 
            &= \big(W_1^T W_2\big)_{i_1,i_2} \\
            &= \left( \Vec{\big(U_1\Sigma_1(i_1)V_1^T\big)} \right)^T\Vec{\big(U_2\Sigma_2(i_2)V_2^T\big)} \\
            &= \big( (V_1 \kron U_1) \Vec{\big(\Sigma_1(i_1)\big)} \big)^T \big( (V_2 \kron U_2) \Vec{\big(\Sigma_2(i_2)\big)}  ) \\
            &= \Vec{\big(\Sigma_1(i_1)\big)}^T (V_1^T V_2 \kron U_1^T U_2) \Vec{\big(\Sigma_2(i_2)\big)} \\
            &= \Vec{\big(\Sigma_1(i_1)\big)}^T \Vec{\big(U_1^T U_2 \Sigma_2(i_2) V_2^T V_1\big)} \\
            &= \text{trace} \big( \Sigma_1(i_1)^T (U_1^T U_2) \Sigma_2(i_2) (V_2^T V_1) \big).
    \end{align*}
This operation has complexity of $\mathcal{O}(r^3 \ell^2 + \max\{\tilde{n}, \hat{n}\}r^2)$, where $r$ is the maximal rank of $\mathbf{W}_1$, $\mathbf{W}_2$.

In the special case when ${\bf W_1}={\bf W_2}={\bf W}=(U, \Sigma, V)$ and the matrices $U$ and $V$ have orthonormal columns, the formula above simplifies to
\begin{equation}
    \label{eq: gram matrix}
    Z_{i_1, i_2}
        = \big(W^T W\big)_{i_1,i_2} = \text{trace}\big(\Sigma(i_1)^T\Sigma(i_2)\big),
\end{equation} 
and the complexity is reduced to $\mathcal{O}(r^3 \ell^2)$.
\end{paragraph}

\begin{paragraph}{Matrix multiplication.}
 Finally, we explain how to do matrix multiplication $Z=WB$ when $W$ and $Z$ are stored in block low-rank format, and $B\in \mathbb{R}^{\ell \times \ell}$. In other words, we compute $\mathbf{Z} = \mathbf{W}B = (\bar{U}, \bar{\Sigma}, \bar{V})$. The factors of $\mathbf{Z}$ can be set as
\begin{equation}
\label{eq: blk matrix mutiplication}
    \bar{U}=U,\quad \bar{V}=V,\quad \text{and}\quad \bar{\Sigma}(i)=\sum^\ell_{j=1}B_{ji}\Sigma(j).
\end{equation}
\end{paragraph}

\begin{paragraph}{Initial matrix.}
 With the described format, using the Khatri--Rao product of two Gaussian matrices for the initial matrix $X^{(0)}$ in Algorithm \ref{alg:lobpcg} is a natural choice. Assume $X^{(0)}=\hat{\Omega} \odot \tilde{\Omega}\in \mathbb{R}^{n\times \ell}$: we compute the QR-factorizations $\hat{\Omega} = \hat{Q} \hat{R}$, $\tilde{\Omega} = \tilde{Q} \tilde{R}$ and set $U = \hat{Q} \in \R^{\hat{n} \times \ell}$, $V = \tilde{Q} \in \R^{\tilde{n} \times \ell}$, and $\Sigma(j) = \hat{R}e_j (\tilde{R}e_j)^T \in \R^{\ell \times \ell}$. The triplet $\mathbf{X}^{(0)} = (U, \Sigma, V)$ is now a low-storage representation of the initial matrix which can be used as input to Algorithm \eqref{alg:lobpcg}.

While the theory from Section~\ref{sec:contour} indicates that this choice of initial matrix $X^{(0)}$ for Algorithm~\ref{alg:lobpcg} is not unreasonable, it is difficult to turn this intuition into a precise mathematical statement; mainly due to the lack of a complete convergence theory for LOBPCG. Even for the simpler case of the preconditioned inverse subspace iteration (PINVIT), the existing convergence analyses~\cite{Knyazev2003geometric, Zhou2019Cluster} impose strong conditions on the initial matrix to guarantee convergence to the smallest eigenvalue(s). These conditions are rarely met for any of the commonly chosen initial matrices, but they also do not seem to be needed for the global convergence of PINVIT. 
\end{paragraph}

\begin{paragraph}{Orthonormalization.}
 In order to prevent the occurrence of an ill-conditioned Gram matrix in \eqref{eq: generalized eigenvalues problem}, as suggested in \cite{Knyazev2007Block}, we ensure that the column vectors inside each $P^{(i)}$ and $R^{(i)}$ are orthonormalized by using the Cholesky decomposition of the Gram matrix $(P^{(i)})^T P^{(i)}$ and $(R^{(i)})^T R^{(i)}$ respectively. The matrix $X^{(i)}$ remains close to being orthonormal at the end of each iteration so we do not subject it to this procedure. The described technique may perform poorly in some cases compared to the Hetmaniuk-Lehoucq orthogonalization strategy \cite{Hetmaniuk2006Basis}. However, it is an inexpensive and easy to implement approach when working with the block low-rank matrix format.
 \end{paragraph}

\begin{remark}
    In Section \ref{sec:contour}, at the end of Algorithm \ref{alg:feast} we needed to compute Rayleigh--Ritz approximation from the subspace $\spann\{x_1, \ldots, x_\ell\}$ where $x_j = \Vec{(\tilde{X}_j \hat{X}_j^\ast)}$. This can be done efficiently by storing $[x_1, \ldots, x_\ell]$ in block low-rank format as $\mathbf{X} = (U, \Sigma, V)$ with $U = [\tilde{X}_1, \ldots, \tilde{X}_\ell]$, $V = [\hat{X}_1, \ldots, \hat{X}_\ell]$, $\Sigma(j) = \diag(0_{r_1}, \ldots, 0_{r_{j-1}}, I_{r_j}, 0_{r_{j+1}}, \ldots, 0_{r_\ell})$, where $r_j$ is the number of columns in $\tilde{X}_j$ and $\hat{X}_j$. After immediate truncation, we can orthonormalize $\mathbf{X}$ and compute the Rayleigh quotient $\mathbf{X}^\ast A \mathbf{X}$ using the operations with block low-rank format as described above. 
    Note that this computation only needs to be done once in Algorithm \ref{alg:feast}, upon completing the quadrature.
\end{remark}

\subsection{Numerical experiments}

In this section we present numerical experiments with Algorithm \ref{alg:lobpcg}, applied to eigenvalue problems with the matrix~\eqref{eq:discretized_eigenvalue_problem} from Section~\ref{sec:implementation}, which we obtained by discretizing the Schr\"{o}dinger equation with finite differences.
We aim at obtaining the $k$ smallest eigenvalues and the associated eigenvectors; this time we will study four different potentials $V$. 

We take 3000 discretization points on each axis, resulting in a symmetric matrix $A$ of size $3000^2 \times 3000^2$. As Algorithm \ref{alg:lobpcg} only works for positive definite matrices, we replace $A$ with $A+\sigma I$ using an appropriate shift $\sigma \in \R$ if necessary. Unless mentioned otherwise, we let $k=4$, the block size is $\ell=6$, and $X^{(0)} = \hat{\Omega} \odot \tilde{\Omega}$, where $\hat{\Omega}$ and $\tilde{\Omega}$ are independent Gaussian random matrices of size $\mathbb{R}^{3000 \times 6}$. The preconditioner is set to $M=I\otimes K+K\otimes I$, and Line \ref{alg:lobpcg:precondotioner} is computed by using the alternating-direction implicit (ADI) Sylvester solver \cite[Algorithm 1]{Benner2009ADI} with 8 ADI iterations. The low-rank truncation tolerance and maximal rank in Algorithm \ref{alg:block trucation} are set to $\epsilon=10^{-7}$ and $r_{max}=50$, respectively. For each iteration we report: 
\begin{itemize}
    \item The residual $\|Ax^{(i)}_j-\lambda^{(i)}_j x^{(i)}_j\|_2$.
    \item The rank $\max\{\Tilde{r},\hat{r}\}$ of the block low-rank matrix format matrix $\mathbf{X}^{(i)}$ after each iteration.
    \item Estimated absolute eigenvalue error $|\lambda_j-\lambda^{(i)}_j|$. As the exact value of $\lambda_j$ is not known, we estimate it using Algorithm $\ref{alg:lobpcg}$ with a lower low-rank truncation tolerance of $\epsilon_{ref}=10^{-10}$, without restricting the maximal rank and by running more (140) iterations.
\end{itemize}

\begin{example}
\label{ex:lobpcg-sum-of-squares}
As in Section~\ref{sec:implementation}, we consider the potential
$V(x,y)=(x^2+y^2-xy)/2$ for $(x,y)\in [-1,1]\times [-1,1]$.
Figure \ref{fig: LOBPCG_sum} displays the resulting residual and absolute errors. We observe that the rank increases quite rapidly during the transient phase of the iteration. On the other hand, once convergence is reached, we can observe that the rank drops. This reflects that the eigenvectors admit a good low-rank approximation. The entire computation takes 27.10 seconds (i.e., 0.45s per iteration on average).
\begin{figure}[H]
\includegraphics[width=0.48\textwidth]{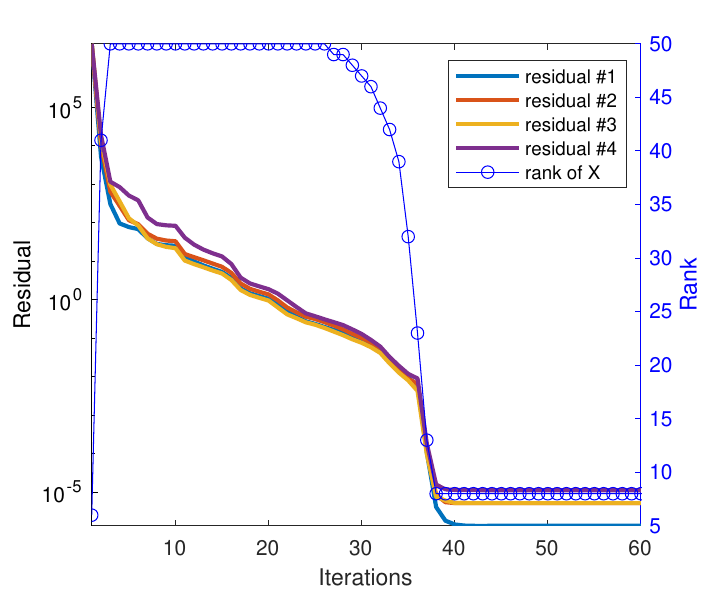}
\hspace{\fill}
\includegraphics[width=0.48\textwidth]{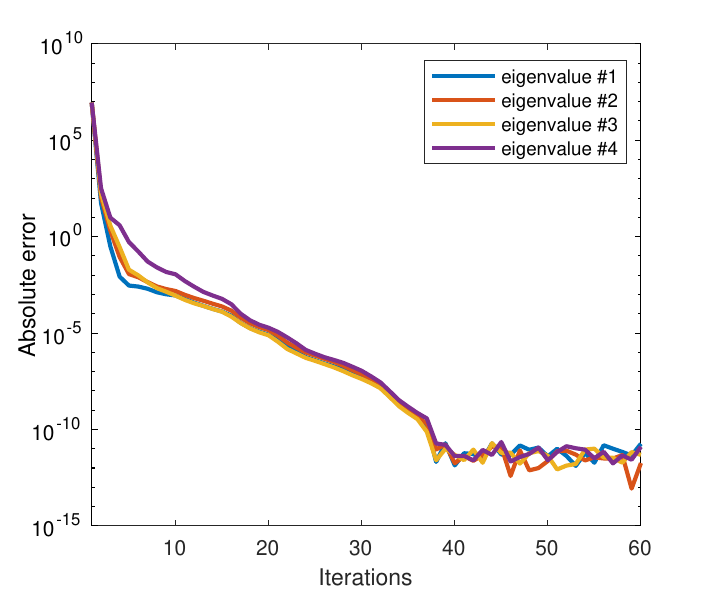}
\caption{Residuals and absolute errors of Algorithm \ref{alg:lobpcg} (LOBPCG with low-rank truncation) applied to Example \ref{ex:lobpcg-sum-of-squares}.}\label{fig: LOBPCG_sum}
\end{figure}
\end{example}

\begin{example}
\label{ex:lobpcg-gaussian}
We now let $V$ be the Gaussian potential 
$
   V(x,y)=-50\exp{(-x^2-y^2)}
$ for $(x,y)\in [-5,5] \times [-5,5]$.
The resulting residuals and absolute errors are reported in Figure \ref{fig: LOBPCG_gauss}. Comparing Figures \ref{fig: LOBPCG_sum} and \ref{fig: LOBPCG_gauss}, the algorithm requires more iterations to reach the asymptotic convergence regime. Although the final approximations have higher rank, the convergence is still satisfactory. The entire computation takes 39.13 seconds (i.e., 0.65s per iteration on average).
\begin{figure}[H]
\includegraphics[width=0.48\textwidth]{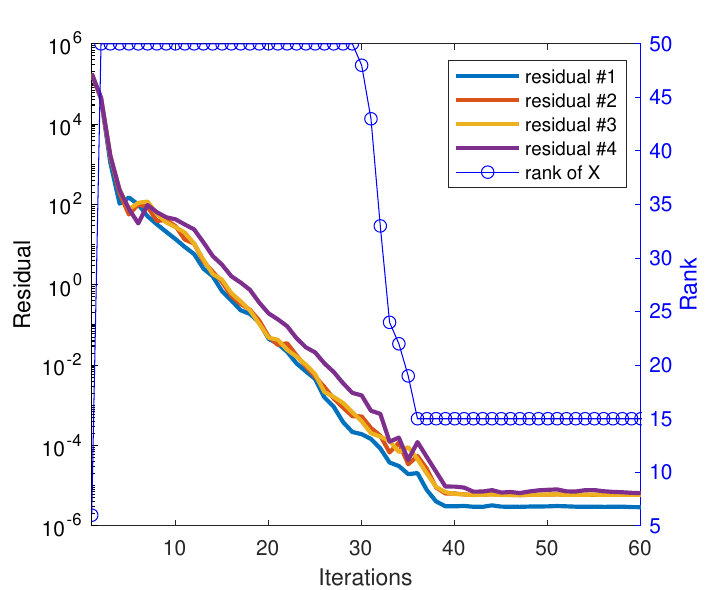}
\hspace{\fill}
\includegraphics[width=0.48\textwidth]{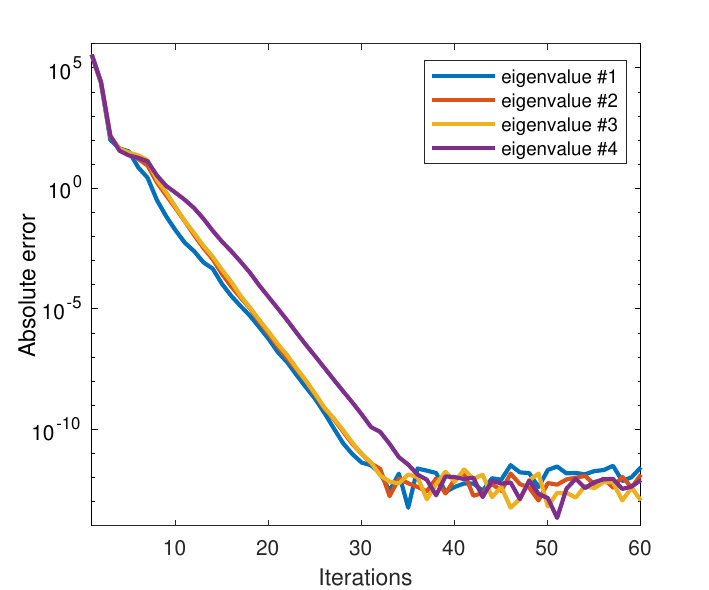}
\caption{Residuals and absolute errors of Algorithm \ref{alg:lobpcg} (LOBPCG with low-rank truncation) applied to Example \ref{ex:lobpcg-gaussian}.}\label{fig: LOBPCG_gauss}
\end{figure}
\end{example}

\begin{example}
\label{ex:lobpcg-mathieu}
For the last example, we let $V$ be a Gaussian perturbation of the Mathieu potential~\cite{boulton2007approximation}:
\begin{equation*}
   V(x,y)=\cos(x)+\cos(y)-6\exp{(-x^2-y^2)},\text{ for } (x,y)\in [-25,25]^2.
\end{equation*} 
Here, our goal is slightly different: we would like to compute the single eigenvalue of $A$ that lies in the middle of the spectrum between two clusters of eigenvalues. To illustrate this, we first discretize the domain using only 100 points on each axis, solve the smaller eigenvalue problem, and plot the 100 smallest eigenvalues of $A$ as purple crosses in Figure \ref{fig: LOBPCG_matieu_eigenvalues}. Our objective is to find the eigenvalue closest to $-0.2$. As Algorithm \ref{alg:lobpcg} cannot target the inner eigenvalues of $A$, we work with the positive definite matrix $\tilde{A}=(A+0.2I)^2$, so that the desired eigenvalue is on the edge of the spectrum. In Figure~\ref{fig: LOBPCG_matieu_eigenvalues}, yellow circles mark the 100 smallest eigenvalues of $\tilde{A}$. The smallest one corresponds to the desired eigenvalue of $A$.
\begin{figure}[H]
\centering
\includegraphics[width=0.75\textwidth]{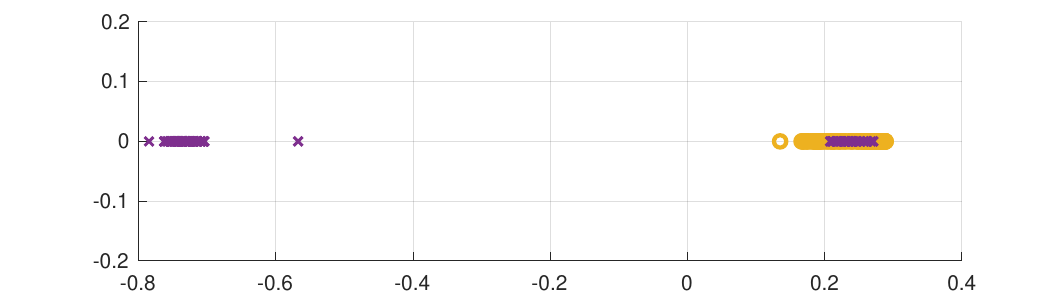}
\caption{ 100 smallest eigenvalues of $A$ and $(A+0.2I)^2$ where $A$ is the eigenvalues problem from Example~\ref{ex:lobpcg-mathieu} with 100 discretization points.}
\label{fig: LOBPCG_matieu_eigenvalues}
\end{figure}
Returning to the original problem where the discretization uses $3000$ points on each axis, resulting in a matrix $A$ of size $3000^2 \times 3000^2$, we apply Algorithm \ref{alg:lobpcg} to $\tilde{A} = (A+0.2I)^2$ with $k=1$ and block size $\ell=3$. 
For this purpose, we first express $A+0.2I$ in the form \eqref{eq:discretized_eigenvalue_problem_kron_form}. The preconditioner used in Algorithm \ref{alg:lobpcg} is then set to $M=I\otimes K^2+ K^2\otimes I$. 
In this example, to obtain an accurate approximation we needed to take a higher maximal rank $r_{max}=120$ and 12 ADI iterations in the Sylvester solver (Line \ref{alg:lobpcg:precondotioner}).
The reference eigenvalue for evaluating the absolute error is obtained by running $600$ iterations of the same algorithm  with the lower low-rank truncation tolerance $\epsilon_{ref}=10^{-10}$, without restricting the maximal rank.

In the left plot of Figure \ref{fig: LOBPCG_matieu}, we display residual norms (with respect to $\tilde A$) for all $\ell=3$ Ritz pairs in each iteration of the algorithm. The absolute eigenvalue errors $|\lam_j - \lam_j^{(i)}|$ with respect to the 3 eigenvalues $\lam_1, \lam_2, \lam_3$ of $A$ closest to $-0.2$ are shown in the right plot. 

This example turns out to be less favorable for Algorithm \ref{alg:lobpcg} compared to the other two examples above. More iterations are required, likely due to the use of a lower-quality preconditioner. However, one can still obtain satisfactory residuals and accurate eigenvalue approximations. The entire computation takes 915.43 seconds (i.e., 2.03s per iteration on average).
\begin{figure}[h]
\includegraphics[width=0.48\textwidth]{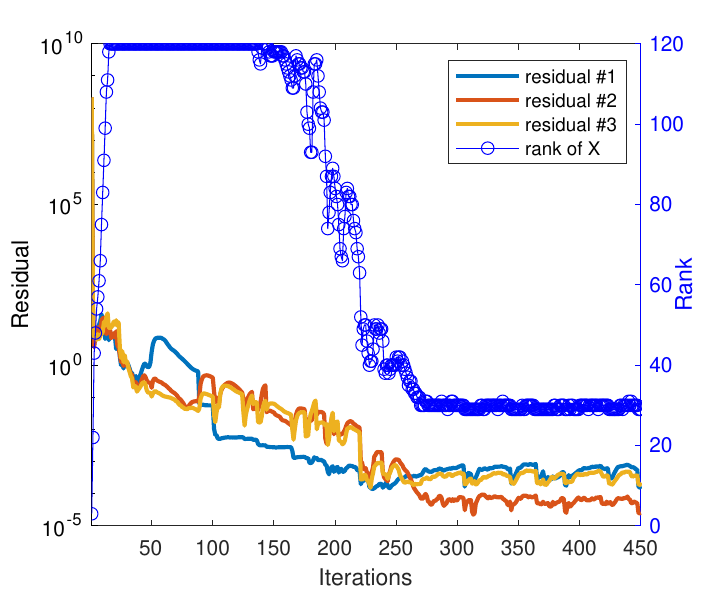}
\hspace{\fill}
\includegraphics[width=0.48\textwidth]{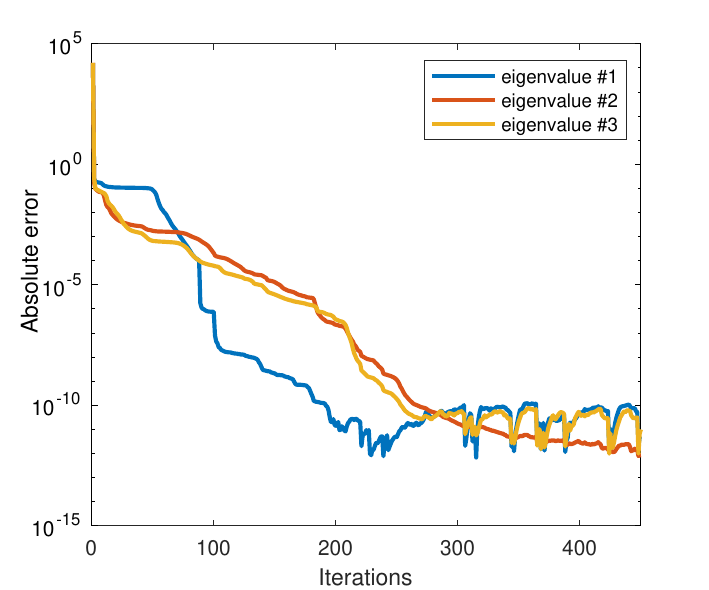}
\caption{Residual with respect to $\Tilde{A}$ and absolute errors of the computed eigenvalues when Algorithm \ref{alg:lobpcg} (LOBPCG with low-rank truncation) is applied to Example~\ref{ex:lobpcg-mathieu}.\label{fig: LOBPCG_matieu}}
\end{figure}

\end{example}

\section{Conclusions}
In this paper, we have analyzed the use of random Khatri–Rao products as embeddings and starting blocks in iterative eigenvalue solvers for Hermitian matrices defined as short sums of Kronecker products. Our technique can be formally extended, but with weaker bounds involving some type of condition number measuring the nonnormality of the matrix, to the problem of approximating a collection of semisimple eigenvalues of a non-Hermitian short sum of Kronecker products.
However, the viability of such a method strongly depends on the availability of an efficient Sylvester equation solver.

\begin{paragraph}{Acknowledgments.}
    The authors thank Patrick K\"{u}rschner for providing the Matlab implementation of the Sylvester ADI method \cite{Kuerschner16}, which is used as the preconditioner in Sections \ref{sec:contour} and \ref{sec:lobpcg}.
    DK thanks Felix Krahmer for helpful discussions on Khatri-Rao structured embeddings.
\end{paragraph}

\section{Appendix}
\label{sec:appendix}

The purpose of this section is to prove Theorem~\ref{Thm: JLmoment}. The structure of the proof follows the one of Theorem 42 from \cite{ahle2020oblivious}; the main difference is that we make all steps explicit in order to keep track of the constants. To establish the proof, we need some preliminary results. For a random variable $Z$, we let $\|Z\|_{L^p} := (\mathbb{E}|Z|^p)^{1/p}$ for $p\in\R$ with $p\geq 1$.
\begin{theorem}[{\cite[Theorem 2]{latala1997estimation}}]
\label{Thm: latala}
    Let $X_1,\ldots, X_\ell$ be a sequence of independent symmetric random variables, and $p\geq 2$. Then the following inequalities hold:
    \begin{equation*}
        \frac{e-1}{2e^{2}}\vvvert(X_i)\vvvert_p\leq\|X_1+\cdots+X_\ell \|_{L^p}\leq e \vvvert(X_i)\vvvert_p,
    \end{equation*} where $\vvvert(X_i)\vvvert_p:=\inf\Big\{t>0\colon  \sum_{i}\ln\mathbb{E}\Big(\Big|1+\frac{X_i}{t}\Big|^p\Big)\leq p\Big\}$.
\end{theorem}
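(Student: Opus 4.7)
I would prove the two inequalities of Theorem~\ref{Thm: latala} separately. Set $t_0 = \vvvert(X_i)\vvvert_p$ and $S = X_1 + \cdots + X_\ell$; by the definition of the infimum, $\prod_{i=1}^{\ell} \mathbb{E}|1 + X_i/t_0|^p \leq e^p$, and $S$ is symmetric because it is a sum of independent symmetric random variables. For the upper bound, I would first observe that for any symmetric random variable $Y$ and any $a \in \mathbb{R}$,
\[
  \mathbb{E}|Y|^p \leq \mathbb{E}|a + Y|^p,
\]
which follows from the identity $\mathbb{E}|a + Y|^p = \tfrac{1}{2}(\mathbb{E}|a+Y|^p + \mathbb{E}|a-Y|^p)$ (by symmetry of $Y$) combined with the convexity inequality $|Y|^p = |\tfrac{1}{2}((a+Y) - (a-Y))|^p \leq \tfrac{1}{2}(|a+Y|^p + |a-Y|^p)$. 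Applied to $Y = S$ and $a = t_0$, this reduces the task to bounding $\|t_0 + S\|_{L^p}$. The crucial step is then the multiplicative bound
\[
  \mathbb{E}|t_0 + S|^p \leq t_0^{p} \prod_{i=1}^{\ell} \mathbb{E}|1 + X_i/t_0|^p,
\]
which together with the normalization would yield $\|t_0 + S\|_{L^p} \leq e\, t_0$.

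I would attempt the multiplicative bound by induction on $\ell$, conditioning on $X_1,\ldots,X_{\ell-1}$ and exploiting independence and symmetry of $X_\ell$ in the inductive step. The required pointwise comparison between $\mathbb{E}_{X_\ell}|a + X_\ell|^p / |a|^p$ and $\mathbb{E}|1 + X_\ell/t_0|^p$ at $a = t_0 + S_{\ell-1}$ is the delicate part: this comparison is not uniform in $a$, so one must either truncate on the event $\{|a| \geq t_0\}$ and handle the complement separately, or integrate the pointwise bound against the law of $S_{\ell-1}$ using a convexity argument tailored to the symmetry of the $X_i$.

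For the lower bound $\frac{e-1}{2e^{2}}\, t_0 \leq \|S\|_{L^p}$, I would argue by contrapositive: assuming $\|S\|_{L^p} \leq s$, I would produce a witness $t' \leq \tfrac{2e^{2}}{e-1} s$ satisfying $\sum_i \ln \mathbb{E}|1 + X_i/t'|^p \leq p$, contradicting $t_0 > \tfrac{2e^{2}}{e-1} s$. With $t'$ proportional to $s$, each factor $\mathbb{E}|1 + X_i/t'|^p$ can be controlled via a Paley--Zygmund-type decomposition, separating the contribution of the event $\{|X_i| \leq t'\}$ (where $|1 + X_i/t'|^p$ is uniformly bounded by a constant) from its complement (where the bound $\mathbb{E}|S|^p \leq s^p$ enters by conditioning on $X_i$ and invoking independence of the remaining summands). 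Summing the logarithms over $i$ and optimizing the split should produce the stated constant.

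The main obstacle I anticipate is achieving the sharp constants, especially on the lower bound. The functional $\vvvert \cdot \vvvert_p$ is defined implicitly as an infimum of a nonlinear Orlicz-type quantity, so Rosenthal-style decompositions have to be combined with tailored truncation arguments in the spirit of Lata\l{}a's original paper. Making each step quantitative, in particular the inductive multiplicative bound in the upper direction and the precise choice of $t'$ in the lower direction, is where most of the effort lies.
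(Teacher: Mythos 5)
This statement is a verbatim quotation of Lata\l{}a's result, which the paper uses as a black box (the bracket in the theorem header points to \cite{latala1997estimation}); the paper itself offers no proof to compare against. That said, your sketch can still be assessed on its own terms.

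Your high-level plan for the upper bound is consistent with how such bounds are actually obtained: shift by $t_0$ using the symmetry inequality $\mathbb{E}|Y|^p \le \mathbb{E}|a+Y|^p$, then try to factor. But the step you label ``delicate'' is in fact the entire content of the theorem, and the remedies you offer do not close it. The pointwise comparison $\mathbb{E}_{X_\ell}|a+X_\ell|^p \le |a|^p\,\mathbb{E}|1+X_\ell/t_0|^p$ holds precisely when $|a|\ge t_0$ and \emph{reverses} when $|a|<t_0$ (both follow from the monotonicity of $s\mapsto \mathbb{E}\tilde\varphi_p(sX)$ on $[0,\infty)$, with $\tilde\varphi_p$ as in~\eqref{eq:defphi}). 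Truncating on $\{|a|\ge t_0\}$ is therefore not a patch: on the complementary event the inequality points the wrong way, and there is no obvious a priori control of $\mathbb{P}\{|t_0+S_{\ell-1}|<t_0\}$ that would let you absorb the bad region. What actually rescues the argument is not truncation but a genuine averaging identity: by symmetry of $S_{\ell-1}$ one replaces the integrand by its symmetrization $\tfrac12\bigl(\psi(t_0+S_{\ell-1})+\psi(t_0-S_{\ell-1})\bigr)$, and one must then prove the nontrivial two-point inequality $g(1+\tau)+g(|1-\tau|)\le 0$ for all $\tau\ge 0$, where $g(s)=\mathbb{E}|s+X|^p - s^p\,\mathbb{E}|1+X|^p$. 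This requires a careful convexity/comparison lemma in the spirit of Lata\l{}a's Lemma~1; it is not implied by the generic observations in your sketch. Without this lemma, the claimed multiplicative bound $\mathbb{E}|1+S/t_0|^p \le \prod_i \mathbb{E}|1+X_i/t_0|^p$ is unproved, and the upper bound does not follow.

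The lower-bound sketch has a similar issue: invoking ``a Paley--Zygmund-type decomposition'' and ``optimizing the split'' does not show how the specific constant $\tfrac{e-1}{2e^2}$ emerges, nor why the witness $t'$ you construct would actually lie in the admissible set for $\vvvert(X_i)\vvvert_p$. The quantitative lower bound again relies on a moment comparison for $\tilde\varphi_p$ that your proposal does not supply. In short, your outline correctly locates the crux (the factorization/comparison lemma for $\tilde\varphi_p$), but it does not contain the idea needed to prove it, so there is a genuine gap in both directions.
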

The proof of the following lemma closely follows the proof of \cite[Corollary 1]{latala1997estimation}.
\begin{lemma}
\label{lemma: orlicz_upper_bound}
    Let $X, X_1,\ldots, X_\ell$ be a sequence of i.i.d. symmetric random variables. Then, for any integer $p\geq 2$,
    \begin{equation*}
        \vvvert(X_i)\vvvert_p\leq 2e\cdot \sup \Big\{\frac{p}{s}\Big(\frac{\ell}{p}\Big)^\frac{1}{s}\|X\|_{L^s}\colon \max\Big(2,\frac{p}{\ell}\Big)\leq s\leq p\Big\}.
    \end{equation*}
\end{lemma}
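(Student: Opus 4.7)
The starting point is to translate the definition of the Orlicz-type norm into a concrete condition on $t$: because the $X_i$ are i.i.d., the sum $\sum_i \ln\mathbb{E}|1+X_i/t|^p$ equals $\ell\ln\mathbb{E}|1+X/t|^p$, so any $t$ with $\mathbb{E}|1+X/t|^p\le e^{p/\ell}$ is an admissible upper bound for $\vvvert(X_i)\vvvert_p$. The plan is to set $t = 2eM$, where $M$ denotes the supremum on the right-hand side of the claimed bound (without the $2e$ factor), and verify that the expectation inequality holds for this $t$.

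Using the symmetry of $X$, the odd moments vanish and a binomial-type expansion gives
\[
\mathbb{E}|1+X/t|^p \;\le\; 1 + \sum_{\substack{s=2\\ s\text{ even}}}^{p} \binom{p}{s}\frac{\|X\|_{L^s}^{s}}{t^{s}}.
\]
For even integer $p$ this is a genuine identity; for odd integer $p$ one first writes $\mathbb{E}|1+X/t|^p = \tfrac12\mathbb{E}(|1+X/t|^p+|1-X/t|^p)$ and dominates the even function $\tfrac12(|1+y|^p+|1-y|^p)$ by a polynomial $\sum_{s \text{ even}}\binom{p}{s}y^{s}$ (this is the minor technical fix absent in the even case).

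For each even $s$ lying in the generic range $[\max(2,p/\ell),\,p]$, the definition of $M$ directly yields $\|X\|_{L^s}/t \le \tfrac{s}{2ep}(p/\ell)^{1/s}$, which, combined with the standard binomial estimate $\binom{p}{s}\le(ep/s)^{s}$, reduces each summand to
\[
\binom{p}{s}\left(\frac{\|X\|_{L^s}}{t}\right)^{s} \;\le\; \frac{1}{2^{s}}\cdot\frac{p}{\ell}.
\]
For even $s < p/\ell$ (only nontrivial when $p/\ell>2$), I exploit the monotonicity of $L^s$-norms under probability measures and replace $\|X\|_{L^s}$ by $\|X\|_{L^{s_0}}$ for some $s_0$ just above $p/\ell$ that still lies in the range of the supremum defining $M$; a careful computation of the resulting exponent $(p/\ell)^{s/s_0}$, together with $\binom{p}{s}\le(ep/s)^{s}$, again yields a bound of the same geometric form $\lesssim 2^{-s}(p/\ell)$.

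Summing the resulting geometric series over even $s\ge2$ gives $\sum_s \binom{p}{s}(\|X\|_{L^s}/t)^{s} \le (p/\ell)\sum_{k\ge1}4^{-k} = p/(3\ell)$, so $\mathbb{E}|1+X/t|^p \le 1+p/(3\ell)\le e^{p/\ell}$. The reduction from the first paragraph then gives $\vvvert(X_i)\vvvert_p\le t = 2eM$, which is the claim. The two main technical obstacles are the bookkeeping in the ``low $s$'' regime $s<p/\ell$, where a naive use of the definition of $M$ fails and the choice of the auxiliary exponent $s_0$ must be made carefully to keep the factor $(p/\ell)^{s/s_0}$ under control, and the symmetrization polynomial estimate needed when $p$ is odd; the remainder of the argument is a routine chain of inequalities.
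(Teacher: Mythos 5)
Your reduction to the condition $\mathbb{E}|1+X/t|^p\le e^{p/\ell}$, the choice $t=2eM$, the symmetrization to remove odd moments, and the per-term bound $\binom{p}{s}(\|X\|_{L^s}/t)^s\le 2^{-s}(p/\ell)$ for $s$ in the range $[\max(2,p/\ell),p]$ all match the paper and are correct. In particular, when $p/\ell\le 2$ your argument is complete.

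The gap is in the low-index regime $2\le s<p/\ell$. You claim that replacing $\|X\|_{L^s}$ by $\|X\|_{L^{s_0}}$ with $s_0$ just above $p/\ell$ still yields a bound of the form $\lesssim 2^{-s}(p/\ell)$, but this is false. Carrying out the computation you describe with $s_0=p/\ell$ gives
\[
\binom{p}{s}\Big(\frac{\|X\|_{L^{s_0}}}{t}\Big)^{s}
\;\le\;\Big(\frac{ep}{s}\Big)^{s}\Big(\frac{s_0}{2ep}\Big)^{s}\Big(\frac{p}{\ell}\Big)^{s/s_0}
\;=\;\Big(\frac{s_0}{2s}\Big)^{s}\Big(\frac{p}{\ell}\Big)^{s/s_0},
\]
and for $s\ll p/\ell$ the factor $(s_0/(2s))^{s}=(p/(2s\ell))^{s}$ is large, not small. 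Concretely, for $s=2$ and $p/\ell=100$ this evaluates to roughly $685$, which is far above the claimed $2^{-2}(p/\ell)=25$; taking $s_0$ larger than $p/\ell$ only worsens the $(s_0/(2s))^s$ factor. Consequently your final step $\mathbb{E}|1+X/t|^p\le 1+p/(3\ell)$ does not hold when $p/\ell>2$.

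The fix (which is what the paper does) is to not insist on a geometric per-term bound in this regime, but to bound the entire partial sum over $k\le p/\ell$ at once: using $\binom{p}{k}\le p^k/k!$ and the monotonicity $\|X\|_{L^k}\le\|X\|_{L^{p/\ell}}$ one gets
\[
1+\sum_{2\le k\le p/\ell}\binom{p}{k}\Big(\frac{\|X\|_{L^k}}{t}\Big)^{k}\;\le\;\exp\!\Big(\frac{p\,\|X\|_{L^{p/\ell}}}{2e\hat t}\Big)\;\le\;e^{p/(2\ell)},
\]
since $\|X\|_{L^{p/\ell}}\le\hat t\,\ell^{-1}(p/\ell)^{\ell/p}$ and $(p/\ell)^{\ell/p}\le e$. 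Adding the geometric contribution $\le p/\ell$ from $s>p/\ell$ and using $e^{p/(2\ell)}+p/\ell\le e^{p/\ell}$ then closes the argument. Your write-up needs this replacement in the low-$s$ case; the rest stands.
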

\begin{proof}
    We define the following functions on $\mathbb{R}$ for $p>0$:
    \begin{equation} \label{eq:defphi}
        \varphi_p(x)=|1+x|^p \text{ and } \tilde{\varphi}_p(x)=\frac{\varphi_p(x)+\varphi_p(-x)}{2} = \begin{cases}
  \frac{(1+|x|)^p+(1-|x|)^p}{2}& |x|\leq1 \\
 \frac{(1+|x|)^p+(|x|-1)^p}{2} & |x|>1
\end{cases}.
    \end{equation}
Using that the variables $X_i$ are i.i.d. and symmetric, it follows that
\begin{eqnarray}
    \vvvert(X_i)\vvvert_p & = & \inf\Big\{t>0: \sum_{i}\ln\mathbb{E}\Big(\Big|1+\frac{X_i}{t}\Big|^p\Big)\leq p\Big\}=\inf\big\{t>0: \mathbb{E}\big(\varphi_p(X/t)\big)\leq e^{p/\ell}\big\} \nonumber \\
    &=& \inf\big\{t>0\colon  \mathbb{E}\big(\tilde \varphi_p(X/t)\big)\leq e^{p/\ell}\big\} \label{eq:exprXphi}
\end{eqnarray} Setting
\begin{equation*}
     \hat t = \sup \Big\{\frac{p}{s}\Big(\frac{\ell}{p}\Big)^\frac{1}{s}\|X\|_{L^s}\colon \max\Big(2,\frac{p}{\ell}\Big)\leq s\leq p\Big\},
\end{equation*}
we can conclude the proof by showing that $2 e \hat t$ is in the admissible set of~\eqref{eq:exprXphi}, that is,
$\mathbb{E}\big(\tilde \varphi_p(X/(2 e \hat t))\big)\leq e^{p/\ell}$. For this purpose, we will make use of the inequality
\begin{equation*}
    \tilde{\varphi}_p(x)\leq 1+\sum_{2\leq k<p}{p\choose k}|x|^k+|x|^p,
\end{equation*}
which follows from the second expression for $\tilde \varphi$ in~\eqref{eq:defphi}.

Assuming $p/\ell \le 2$ and using ${p\choose k}\leq (\frac{ep}{k})^k$, it then follows that
\begin{align*}
    \mathbb{E}\tilde{\varphi}_p\Big(\frac{X}{2e\hat{t}}\Big)&\leq 1+\sum_{2\leq k<p} \frac{p^k}{(2\hat{t}k)^k}\|X\|^k_{L^k}+\frac{\|X\|^p_{L^p}}{(2e\hat{t})^p} \leq 1+\sum_{2\leq k<p} \frac{1}{2^k}\frac{p}{\ell}+\frac{1}{(2e)^p}\frac{p}{\ell}\\ &\leq 1+\frac{p}{\ell}\sum_{2\leq k\leq p} \frac{1}{2^k} \leq 1+\frac{p}{\ell} \leq e^{p/\ell}.
\end{align*} 
For the case $p/\ell\geq 2$, we use $(\frac{p}{\ell})^{\frac{\ell}{p}}\leq e$ and, again, ${p\choose k}\leq (\frac{ep}{k})^k$ to obtain 
\begin{align*}
       \mathbb{E}\tilde{\varphi}_p\Big(\frac{X}{2e\hat{t}}\Big)&\leq 1+ \sum_{2\leq k\leq p/\ell}  {p\choose k}\frac{\|X\|^k_{L^k}}{(2e\hat{t})^k}+\sum_{p/\ell<k<p} \frac{p^k}{(2\hat{t}k)^k}\|X\|^k_{L^k}+\frac{\|X\|^p_{L^p}}{(2e\hat{t})^p}\\&\leq 1+\sum_{1\leq k\leq p/\ell} \frac{p^k}{k!}\frac{\|X\|^k_{L^k}}{(2e\hat{t})^k}+\sum_{p/\ell<k\leq p} \frac{1}{2^k}\frac{p}{\ell}
       \\&\leq  \exp(p\|X/(2e\hat{t})\|_{L^{p/\ell}}+\sum_{p/\ell<k\leq p}  \frac{1}{2^k}\frac{p}{\ell} \leq e^{p/(2\ell)}+\frac{p}{\ell} \leq e^{p/\ell}.      
\end{align*}
This completes the proof.
\end{proof}
The following corollaries provide upper bounds on the $L^p$ norm for sums of i.i.d. mean zero random variables.
\begin{corollary}
\label{corollary: sum_Lp}
Let $Z, Z_1,\ldots, Z_\ell$ be a sequence of i.i.d mean-zero random variables, $\epsilon_1,\ldots \epsilon_\ell$ be Bernoulli sequence independent of $Z$ and integer $p\geq 2$. Suppose that $\|Z\|_{L^p}<\infty$, then
\begin{equation*}
    \|Z_1+\cdots+Z_\ell\|_{L^p}\leq 2\|\epsilon_1 Z_1+\cdots+\epsilon_\ell Z_\ell \|_{L^p}\leq 4e^2 \sup \Big\{\frac{p}{s}\Big(\frac{\ell}{p}\Big)^\frac{1}{s}\|Z\|_{L^s}:\max\Big(2,\frac{p}{\ell}\Big)\leq s\leq p\Big\}.
\end{equation*}
\end{corollary}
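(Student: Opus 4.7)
The plan is to establish the two inequalities separately, each by chaining previously established results with a short standard argument.

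For the first inequality (standard symmetrization), I would introduce an independent copy $Z_1', \ldots, Z_\ell'$ of $Z_1, \ldots, Z_\ell$. Since each $Z_i'$ has mean zero and is independent of $Z_1, \ldots, Z_\ell$, Jensen's inequality applied conditionally gives
\[
\Big\| \sum_i Z_i \Big\|_{L^p} = \Big\| \sum_i (Z_i - \mathbb{E} Z_i') \Big\|_{L^p} \leq \Big\| \sum_i (Z_i - Z_i') \Big\|_{L^p}.
\]
The variables $Z_i - Z_i'$ are symmetric, so $(Z_i - Z_i')$ has the same joint distribution as $(\epsilon_i(Z_i - Z_i'))$. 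A triangle inequality then yields $\|\sum_i (Z_i - Z_i')\|_{L^p} \leq 2\|\sum_i \epsilon_i Z_i\|_{L^p}$, where I also use that $\epsilon_i Z_i$ and $\epsilon_i Z_i'$ have the same distribution.

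For the second inequality, the variables $\epsilon_i Z_i$ are i.i.d.\ and symmetric (by the independence of $\epsilon_i$ from $Z_i$ and the symmetry of the Rademacher sign), so Theorem~\ref{Thm: latala} applies and gives $\|\sum_i \epsilon_i Z_i\|_{L^p} \leq e\, \vvvert(\epsilon_i Z_i)\vvvert_p$. Since $|\epsilon_i| = 1$ and $\epsilon_i$ is independent of $Z_i$, one has $\|\epsilon_i Z_i\|_{L^s} = \|Z\|_{L^s}$ for every $s \geq 1$. Applying Lemma~\ref{lemma: orlicz_upper_bound} to the symmetric sequence $\epsilon_i Z_i$ then gives
\[
\vvvert(\epsilon_i Z_i)\vvvert_p \leq 2e \cdot \sup\Big\{\tfrac{p}{s}\big(\tfrac{\ell}{p}\big)^{1/s} \|Z\|_{L^s} : \max(2, p/\ell) \leq s \leq p\Big\}.
\]
Combining these two bounds yields $\|\sum_i \epsilon_i Z_i\|_{L^p} \leq 2e^2 \sup\{\cdots\}$, and multiplying by the factor $2$ from the symmetrization step produces the constant $4e^2$ in the stated bound.

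There is no real obstacle here: the argument is pure bookkeeping, with the only subtlety being to justify that the hypotheses of Theorem~\ref{Thm: latala} and Lemma~\ref{lemma: orlicz_upper_bound} apply to the symmetrized sequence $\epsilon_i Z_i$ rather than to $Z_i$ directly, and that the $L^s$ norms of $\epsilon_i Z_i$ coincide with those of $Z$. All other steps are the standard symmetrization trick and the composition of two constants that were already made explicit in the preceding results.
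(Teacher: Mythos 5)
Your proof is correct and takes the same route the paper does: symmetrization (the paper cites Lemma 6.3 of Ledoux--Talagrand) for the first inequality, and Theorem~\ref{Thm: latala} combined with Lemma~\ref{lemma: orlicz_upper_bound} applied to the symmetrized sequence $\epsilon_i Z_i$ for the second. Your additional observation that $\|\epsilon_i Z_i\|_{L^s} = \|Z\|_{L^s}$ is the right justification for carrying the $\sup$ bound over, and the constant $e \cdot 2e \cdot 2 = 4e^2$ is assembled exactly as you describe.
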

\begin{proof}
    The first inequality follows from symmetrization \cite[Lemma 6.3]{LedouxProbability2011} and the second inequality follows from Theorem \ref{Thm: latala} and Lemma \ref{lemma: orlicz_upper_bound}.
\end{proof}
\begin{corollary}
\label{corollary: sum of rv}
    Let $p\geq2$ be an integer and let $Z_1,\ldots, Z_\ell$ be i.i.d. mean zero random variables such that $\|Z\|_{L^s}\leq {(as)}^{2}$ for all $s\geq 1$ and some $a>0$. Then
    \begin{equation*}
        \|Z_1 + \cdots + Z_\ell \|_{L^p}\leq 4e^2  \max\Big\{\frac{1}{2}(2a)^2\sqrt{p\ell},\Big(\frac{\ell}{p}\Big)^{\frac{1}{p}}(ap)^2\Big\}.
    \end{equation*}
\end{corollary}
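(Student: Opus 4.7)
The plan is to apply Corollary \ref{corollary: sum_Lp} directly and then optimize the resulting supremum. Plugging the hypothesis $\|Z\|_{L^s} \leq (as)^2$ into the conclusion of Corollary \ref{corollary: sum_Lp} yields
\[
    \|Z_1 + \cdots + Z_\ell\|_{L^p} \;\leq\; 4e^2 \sup\Big\{\,a^2 p\, s\, (\ell/p)^{1/s} \;:\; \max(2, p/\ell) \leq s \leq p\,\Big\},
\]
so everything reduces to determining the supremum of the function $g(s) = s\,(\ell/p)^{1/s}$ on the interval $[\max(2,p/\ell),\,p]$.

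To analyze $g$, I will take logarithmic derivatives: $(\ln g)'(s) = \frac{1}{s} - \frac{\ln(\ell/p)}{s^2} = \frac{s - \ln(\ell/p)}{s^2}$. There are two cases depending on the sign of $\ln(\ell/p)$. If $\ell < p$, then $\ln(\ell/p) < 0$, so $g$ is strictly increasing and its maximum on the interval is attained at $s = p$, yielding $g(p) = p\,(\ell/p)^{1/p}$. If $\ell \geq p$, then $g$ has a unique interior critical point at $s^\ast = \ln(\ell/p)$, which, via the second derivative test (or by noting that $(\ln g)'$ changes from negative to positive), is a local \emph{minimum}. Consequently the maximum is again attained at one of the endpoints of the interval.

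Since $\ell \geq p$ implies $p/\ell \leq 1 < 2$, the lower endpoint is simply $s = 2$, and the two candidate values are $g(2) = 2\sqrt{\ell/p}$ and $g(p) = p(\ell/p)^{1/p}$. Multiplying by the prefactor $a^2 p$ converts these respectively into
\[
    a^2 p \cdot 2\sqrt{\ell/p} \;=\; 2a^2 \sqrt{p\ell} \;=\; \tfrac{1}{2}(2a)^2 \sqrt{p\ell}
    \quad\text{and}\quad
    a^2 p \cdot p\,(\ell/p)^{1/p} \;=\; (\ell/p)^{1/p}(ap)^2,
\]
which are exactly the two quantities appearing inside the max in the statement. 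In the remaining case $\ell < p$, only the second term is produced, and it is always dominated by the $\max$, so the inequality still holds. Combining both cases with the factor $4e^2$ from Corollary \ref{corollary: sum_Lp} completes the proof.

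No serious obstacle is anticipated; the only delicate point is recognizing that the interior critical point of $g$ is a minimum (not a maximum), which is why the supremum is realized at the endpoints and therefore produces exactly the two-term maximum in the claim. A minor sanity check is that when $p/\ell \geq 2$, the left endpoint becomes $p/\ell$ rather than $2$, but by monotonicity of $g$ in that regime the right endpoint $s=p$ still governs the supremum, so the stated bound remains valid.
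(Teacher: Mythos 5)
Your proof is correct and follows essentially the same approach as the paper: plug the moment hypothesis $\|Z\|_{L^s}\leq(as)^2$ into Corollary~\ref{corollary: sum_Lp} and show that the resulting supremum over $s$ is attained at the endpoints of the interval. The only cosmetic difference is that the paper establishes this by computing $h''(s)=(\ell/p)^{1/s}s^{-3}\log^2(\ell/p)\geq 0$ and invoking convexity, while you analyze the sign of the logarithmic first derivative to show the unique interior critical point is a minimum; both yield the same endpoint evaluation and case split.
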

\begin{proof}
     Considering the function $h(s)=(\frac{\ell}{p})^\frac{1}{s}s$, we see that its second derivative
     $h^{\prime\prime}(s)=(\frac{\ell}{p})^\frac{1}{s}\frac{1}{s^3} \log^2(\frac{\ell}{p})$ is positive and, hence, the set
     $\big\{\frac{p}{s} (\frac{\ell}{p})^\frac{1}{s}(as)^2\colon \max(2,\frac{p}{\ell})\leq s\leq p\big\}$ attains its maximum on the boundary of $s$. Inserting this observation into the bound of  Corollary \ref{corollary: sum_Lp} completes the proof.
\end{proof}
Then following lemma states a basic inequality on inner products with standard Gaussian random vectors.
\begin{lemma}
\label{Lemma: Gassuian lp}
Consider independent $Z_1,\ldots, Z_\ell \sim N(0,1)$ and let $a=(a_1,\ldots, a_\ell)\in \mathbb{R}^{\ell}$.  Then 
$
    \|a_1 Z_1 + \cdots + a_\ell Z_\ell \|_{L^p}\leq \sqrt{p}\|a\|_2
$
holds for every $p\geq 1$.
\end{lemma}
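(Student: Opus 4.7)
The plan is to first reduce the statement to a bound on the $L^p$-norm of a single standard Gaussian, and then prove that bound by exact moment calculations combined with the monotonicity of $L^p$-norms.

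For the reduction, I would observe that the sum $Y := a_1 Z_1 + \cdots + a_\ell Z_\ell$ is a linear combination of independent centered Gaussians, hence is itself a centered Gaussian with variance $\sum_i a_i^2 = \|a\|_2^2$. Consequently $Y$ has the same distribution as $\|a\|_2\, Z$ for $Z \sim N(0,1)$, and the $L^p$-norm scales homogeneously:
\[
    \|Y\|_{L^p} \;=\; \|a\|_2 \cdot \|Z\|_{L^p}.
\]
It therefore suffices to establish the purely one-dimensional inequality $\|Z\|_{L^p} \le \sqrt{p}$ for every $p\ge 1$.

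For the one-dimensional bound, I would start with the case of even integer $p = 2k$, where the moments are explicitly
\[
    \mathbb{E}\, Z^{2k} \;=\; (2k-1)!! \;=\; \frac{(2k)!}{2^k\, k!}.
\]
Since $(2k)!/k! = (2k)(2k-1)\cdots (k+1)$ is a product of $k$ factors each bounded by $2k$, one has $(2k)!/k! \le (2k)^k$, and therefore $\mathbb{E}\, Z^{2k} \le k^k = (p/2)^{p/2}$. Taking the $p$-th root gives $\|Z\|_{L^p} \le \sqrt{p/2} \le \sqrt{p}$.

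To extend to arbitrary real $p \ge 1$ I would use the standard monotonicity of $L^p$-norms (a direct consequence of Jensen's inequality): if $1 \le p \le q$, then $\|Z\|_{L^p} \le \|Z\|_{L^q}$. For $p \in [1,2]$ this yields $\|Z\|_{L^p} \le \|Z\|_{L^2} = 1 \le \sqrt{p}$, while for $p > 2$ I would pick $q = 2\lceil p/2\rceil$, an even integer with $q \le p+1 \le 2p$, obtaining
\[
    \|Z\|_{L^p} \;\le\; \|Z\|_{L^q} \;\le\; \sqrt{q/2} \;\le\; \sqrt{p}.
\]
Combining these cases yields $\|Z\|_{L^p}\le \sqrt{p}$ for all $p\ge 1$ and, after the scaling step, the stated inequality. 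There is no serious obstacle here; the only minor point requiring a little care is that $p$ is allowed to be any real number rather than just an integer, which is why the monotonicity reduction to even integer exponents is needed.
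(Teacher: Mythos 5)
Your proof is correct, and it reaches the bound by a route that differs in detail from the paper's. The paper writes down the exact closed-form $\|Z\|_{L^p} = \sqrt{2}\,\big(\Gamma(\tfrac{p+1}{2})\big)^{1/p}(\sqrt{\pi})^{-1/p}$ and then bounds $\Gamma(s)$ directly for real $s=(p+1)/2$: it uses $0\le\Gamma(s)\le 1$ on $[1,2]$ to dispose of $1\le p\le 3$, and for $p\ge 3$ it peels off integer steps via $\Gamma(s)=(s-1)\Gamma(s-1)$ to get $\Gamma(s)\le (p/2)^{p/2}$. You instead avoid the Gamma function entirely: you compute the exact even moments $\mathbb{E}Z^{2k}=(2k-1)!!$, bound them elementarily by $k^k$, and then transfer the even-integer case to arbitrary real $p$ by monotonicity of $\|\cdot\|_{L^p}$. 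The two arguments are of comparable length and elementarity; yours has the small advantage of never needing the Gamma recursion or any non-integer Gamma evaluation, while the paper's keeps a single explicit identity in play throughout. One small slip in your write-up: with $q=2\lceil p/2\rceil$ the correct upper estimate is $q<p+2$, not $q\le p+1$ (take $p=2.1$, giving $q=4>3.1$). This does not affect the conclusion, since for $p>2$ one still has $q<p+2<2p$, so $\sqrt{q/2}<\sqrt{p}$ as needed; it is just the stated intermediate inequality that should be corrected.
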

\begin{proof}
    Because $Y:= a_1 Z_1 + \cdots + a_\ell Z_\ell \sim N(0,\|a\|^2_2)$ and $\|Y \|_{L^p} = \sqrt{2} \big( \Gamma(\frac{p+1}{2}) \big)^{1/p} (\sqrt{\pi} )^{-1/p} \|a\|_2$, the result is shown by bounding the Gamma function $\Gamma(s)$ for $s = (p+1)/2$. For $1\le s \le 2$, we have $0\le \Gamma(s) \le 1$ and, hence, $\|Y \|_{L^p} \le \sqrt{2} (\sqrt{\pi} )^{-1/p} \|a\|_2 \le \sqrt{p} \|a\|_2$ holds for $1\le p \le 3$. For $p \ge 3$, we use $\Gamma(s) = (s-1) \Gamma(s-1)$, decompose $s = k + \tilde s$ with $k = \lfloor s \rfloor - 1 \in \mathbb N$, $\tilde  s \in [1,2]$, and obtain
    \[
     \Gamma(s) = (s-1)(s-2)\cdots (s-k) \Gamma(\tilde s) \leq (p / 2)^{k}\leq (p/2)^{p/2},
    \]
    which implies $\|Y\|_{L^p} \le \sqrt{p} (\sqrt{\pi} )^{-1/p} \|a\|_2 \le \sqrt{p} \|a\|_2$ concludes the proof.
\end{proof}

The following lemma is a special case of \cite[Lemma 19]{ahle2020oblivious}.

\begin{lemma}
\label{lemma:knocker_Ls}
    Let $\tilde{\omega}\in \mathbb{R}^{\tilde{n}}$, $\hat{\omega}\in \mathbb{R}^{\hat{n}}$ be independent standard Gaussian random vectors and let $a\in \mathbb{R}^{\hat{n}\Tilde{n}}$. Then 
    $
        \|\langle \tilde{\omega}\otimes  \hat{\omega},a\rangle\|_{L^s}\leq s\|a\|_2
    $
    holds for every $s\geq 1$.
\end{lemma}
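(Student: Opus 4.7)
The first move is to repackage the bilinear form in a convenient way. Identifying $a \in \mathbb{R}^{\hat n \tilde n}$ with a matrix $A \in \mathbb{R}^{\hat n \times \tilde n}$ via $a = \vecc(A)$, we get $\|a\|_2 = \|A\|_F$ and
\[
\langle \tilde\omega \otimes \hat\omega, a\rangle \;=\; \hat\omega^T A\, \tilde\omega.
\]
Thus the claim becomes $\|\hat\omega^T A\tilde\omega\|_{L^s} \leq s\|A\|_F$ for every $s \geq 1$.

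The second step is to condition on $\tilde\omega$. With $\tilde\omega$ fixed, $\hat\omega^T A\tilde\omega = \sum_{i=1}^{\hat n} (A\tilde\omega)_i \hat\omega_i$ is a linear combination of the i.i.d.\ $N(0,1)$ entries of $\hat\omega$ with coefficient vector $A\tilde\omega$, so Lemma~\ref{Lemma: Gassuian lp} gives
\[
\bigl(\mathbb{E}_{\hat\omega}|\hat\omega^T A\tilde\omega|^s \mid \tilde\omega\bigr)^{1/s} \;\leq\; \sqrt{s}\,\|A\tilde\omega\|_2.
\]
Taking the $s$-th root of the total expectation, we are reduced to bounding $\bigl(\mathbb{E}\|A\tilde\omega\|_2^s\bigr)^{1/s}$.

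The third step handles this reduced quantity by diagonalising $A$ through its SVD. Writing $A = U\Sigma V^T$ and setting $\tilde z = V^T\tilde\omega$, which is again a standard Gaussian vector, we obtain $\|A\tilde\omega\|_2^2 = \sum_i \sigma_i^2 \tilde z_i^2$. For $s \geq 2$, the triangle inequality in $L^{s/2}$ combined with $\|\tilde z_i^2\|_{L^{s/2}} = \|\tilde z_i\|_{L^s}^2 \leq s$ (the last inequality is Lemma~\ref{Lemma: Gassuian lp} with $\ell = 1$) yields
\[
\bigl(\mathbb{E}\|A\tilde\omega\|_2^s\bigr)^{2/s} \;=\; \Bigl\|\sum_i \sigma_i^2 \tilde z_i^2\Bigr\|_{L^{s/2}} \;\leq\; \sum_i \sigma_i^2\,\|\tilde z_i^2\|_{L^{s/2}} \;\leq\; s\,\|A\|_F^2,
\]
so that $\bigl(\mathbb{E}\|A\tilde\omega\|_2^s\bigr)^{1/s} \leq \sqrt{s}\,\|A\|_F$. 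Combining this with the conditional bound from the second step gives $\|\hat\omega^T A\tilde\omega\|_{L^s} \leq \sqrt{s}\cdot\sqrt{s}\,\|A\|_F = s\|A\|_F$, as required. For the remaining range $1 \leq s < 2$, we simply use Jensen's inequality $\bigl(\mathbb{E}\|A\tilde\omega\|_2^s\bigr)^{1/s} \leq \bigl(\mathbb{E}\|A\tilde\omega\|_2^2\bigr)^{1/2} = \|A\|_F$, and combining with the conditional bound produces $\sqrt{s}\,\|A\|_F \leq s\|A\|_F$ since $s \geq 1$.

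The main subtlety, and the only place where one has to be a bit careful, is the decoupling in Step 3: the quadratic expression $\|A\tilde\omega\|_2^2$ is not itself a sum of independent Gaussians, so one cannot apply Lemma~\ref{Lemma: Gassuian lp} to it directly; the SVD reduction is what turns it into a sum of independent $\chi^2$-type terms on which the triangle inequality in $L^{s/2}$ becomes effective. Everything else is a routine conditioning argument, and no new constants are introduced beyond what is already in Lemma~\ref{Lemma: Gassuian lp}.
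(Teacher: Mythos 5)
Your proof is correct and follows essentially the same route as the paper's: reshape $a$ into a matrix, condition on one Gaussian factor, apply Lemma~\ref{Lemma: Gassuian lp}, use the triangle inequality in $L^{s/2}$, and apply Lemma~\ref{Lemma: Gassuian lp} once more to reach $s\|a\|_2$. The SVD rotation in your third step is not actually needed --- writing $\|A\tilde\omega\|_2^2$ as the sum of squared inner products with the rows of $A$ yields the same bound directly, which is what the paper does with the columns of $A^T$ --- but your explicit Jensen argument for $1\leq s<2$ is a careful addition, since the $L^{s/2}$ triangle inequality underlying both proofs requires $s\geq 2$.
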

\begin{proof}
Letting $A \in \R^{\hat n\times \tilde n}$ such that $a = \Vec(A)$, we have $\langle \tilde{\omega}\otimes  \hat{\omega},a\rangle = \hat{\omega}^T A \tilde{\omega}$. Using the independence of $\tilde\omega$ and $\hat \omega$, we obtain the result by applying Lemma~\ref{Lemma: Gassuian lp} twice:
\begin{eqnarray*}
  \| \hat{\omega}^T A \tilde{\omega} \|_{L^s} &=& \|\langle A^T \hat{\omega}, \tilde{\omega} \rangle \|_{L^s} \le \sqrt{p} \| \| A^T \hat{\omega} \|_2 \|_{L^s}
  = \sqrt{p} \big\| \big( \langle a_1, \hat{\omega} \rangle^2  + \cdots +  \langle a_{\tilde n}, \hat{\omega} \rangle^2 \big)^{1/2} \big\|_{L^s} \\
  &\le & \sqrt{p} \big( \big\|  \langle a_1, \hat{\omega} \rangle^2  + \cdots +  \langle a_{\tilde n}, \hat{\omega} \rangle^2 \big\|_{L^s/2} \big)^{1/2}
  \le \sqrt{p} \big( \big\|  \langle a_1, \hat{\omega} \rangle^2 \big\|_{L^s/2}  + \cdots +  \big\| \langle a_{\tilde n}, \hat{\omega} \rangle^2 \big\|_{L^s/2} \big)^{1/2} \\
  &= &  \sqrt{p} \big( \big\|  \langle a_1, \hat{\omega} \rangle \big\|^2_{L^s}  + \cdots +  \big\| \langle a_{\tilde n}, \hat{\omega} \rangle \big\|^2_{L^s} \big)^{1/2} \le 
  p \|A\|_F = p\|a\|_2,
\end{eqnarray*}
where $a_i$ denotes the $i$th column of $A$.
\end{proof}
Now, we have all ingredients for the proof of Theorem \ref{Thm: JLmoment}.
\begin{proof}[Proof of Theorem \ref{Thm: JLmoment}]
 Assume $\|x\|_2=1$, denote the $i$th columns of $\tilde{\Omega}$ and $\hat{\Omega}$ as $\tilde{\omega}_i$ and $\hat{\omega}_i$, respectively, which are independent standard Gaussian vectors. Then \begin{equation*}
     \mathbb{E}[\|\Omega^T x\|^2_2]=\frac{1}{\ell}\sum^\ell_{i=1}\mathbb{E}[ \langle \tilde{\omega}_i\otimes \hat{\omega}_i, x \rangle^2]=\frac{1}{\ell}\sum^\ell_{i=1}\|x\|_2^2=1.
 \end{equation*}
In order to get bounds on the higher moments, 
 let $Z_i=\langle \tilde{\omega}_i\otimes \hat{\omega}_i, x \rangle^2-1$.  For all $s\geq 1$,
\begin{equation*}
\label{eq:zineq}
    \|Z_i\|_{L^s}=\|\langle \tilde{\omega}_i\otimes \hat{\omega}_i, x \rangle^2-1\|_{L^s}\leq 2 {\|\langle \tilde{\omega}_i\otimes \hat{\omega}_i, x \rangle^2}\|_{L^s}=2\|\langle \tilde{\omega}_i\otimes \hat{\omega}_i, x \rangle\|^2_{L^{2s}} \le 8 s^2,
\end{equation*}
where we used Lemma \ref{lemma:knocker_Ls}, the triangle inequality, and $1 = \mathbb{E}{\langle \tilde{\omega}_i\otimes \hat{\omega}_i, x \rangle^2} \le \|\langle \tilde{\omega}_i\otimes \hat{\omega}_i, x \rangle^2 \|_{L^s}$.
This allows us to apply Corollary \ref{corollary: sum of rv} with $a = 2 \sqrt{2}$ to obtain
\begin{align*}
   \big\|\frac{1}{{\ell}}\|(\tilde{\Omega} \odot \hat{\Omega})^Tx\|_2^2-1\big\|_{L^p}
   &=\big\|\frac{1}{{\ell}}(Z_1 + \cdots + Z_{\ell}) \big\|_{L^p}
   \leq 4e^2 \max\Big\{16\sqrt{p/\ell},8\Big(\frac{\ell}{p} \Big)^{\frac{1}{p}} p^2 / \ell \Big\}\\
   &\leq 4e^2 \max\big\{32\sqrt{p/\ell},8{\ell}^{\frac{1}{p}}p^2 / \ell \big\} \leq 128e^2 \max\{\sqrt{p/\ell},p^2 / \ell \},
\end{align*} where the last inequality follows from the fact that if $32\sqrt{p/\ell}\leq 8{\ell}^{\frac{1}{p}}{p^2}/{\ell}$
then $
    \ell^{\frac{1}{p}}\leq (\frac{p}{2})^{3/(p-2)}\leq 4$ for all $p\geq 4.
$ Note that $p= \lceil\frac{1}{2}\log(\frac{1}{\delta})\rceil\geq 4$.  Choosing $\ell\geq \max\{(128e^4)^2p\varepsilon^{-2},(128e^4)p^2\varepsilon^{-1}\}$, we obtain 
\[
    \big\|\frac{1}{{\ell}}\|(\tilde{\Omega} \odot \hat{\Omega})^Tx\|_2^2-1\big\|_{L^p} \leq \varepsilon e^{-2} \leq\varepsilon \delta^{1/p},
\] which is exactly the JL-moment property.
\end{proof}

\bibliographystyle{plain}
\bibliography{references}
 
\end{document}